\newtheorem{theorem}{Theorem}[section]
\newtheorem{proposition}[theorem]{Proposition}
\newtheorem{lemma}[theorem]{Lemma}
\newtheorem{corollary}[theorem]{Corollary}
\newcommand{\R}{\mathbb R}
\newcommand{\eps}{\varepsilon}
\newcommand{\dd}{\, \mathrm{d}}
\newcommand{\tr}{\mbox{tr}}
\newcommand{\intt}{\scriptsize{\mbox{int}}}
\newcommand{\ext}{\scriptsize{\mbox{ext}}}
\newcommand{\midd}{\scriptsize{\mbox{mid}}}
\numberwithin{equation}{section}
\numberwithin{theorem}{section}
\title{Gaussian bounds for the inhomogeneous Landau equation with hard potentials}
\author{Stanley Snelson}
\address{Department of Mathematical Sciences, Florida Institute of Technology, 150 W. University Blvd., Melbourne, FL 32901}
\email{ssnelson@fit.edu}
\thanks{The author was partially supported by a Ralph E. Powe Award from ORAU, and would also like to thank Luis Silvestre for several helpful discussions regarding this work.}
\begin{document}

\begin{abstract}
We consider weak solutions of the spatially inhomogeneous Landau equation with hard potentials ($\gamma \in (0,1]$), under the assumption that mass, energy, and entropy densities are under control. In this regime, with arbitrary initial data, we show that solutions satisfy pointwise Gaussian upper and lower bounds in the velocity variable. This is different from the behavior in the soft potentials case ($\gamma <0$), where Gaussian estimates are known not to hold without corresponding assumptions on the initial data. Our upper bounds imply weak solutions are $C^\infty$ in all three variables, and that continuation of solutions is governed only by the mass, energy, and entropy.
\end{abstract}

\maketitle

\section{Introduction}

The Landau equation is an integro-differential kinetic model arising in plasma physics. For $(t,x,v)\in [0,T]\times \R^d\times\R^d$, the solution $f(t,x,v) \geq 0$ satisfies
%\begin{align}
%\partial_t f + v\cdot \nabla_x f &= \nabla_v \cdot \left( \int_{\R^d} a(v-w)[f(w)\nabla f(v) - f(v)\nabla f(w)] \dd w \right),\label{e:main}\\
%a(z) &= a_{d,\gamma}|z|^{\gamma+2}\left(I - \frac z {|z|}\otimes \frac z{|z|}\right),\nonumber
%\end{align}
\begin{equation}\label{e:main}
\partial_t f + v\cdot \nabla_x f = \nabla_v\cdot \left[\bar a^f(t,x,v)\nabla_v f\right] + \bar b^f(t,x,v)\cdot\nabla_v f + \bar c^f(t,x,v) f,
\end{equation}
with the nonlocal coefficients $\bar a^f(t,x,v)\in \R^{d\times d}$, $\bar b^f(t,x,v) \in \R^d$, and $\bar c^f(t,x,v)\in \R$ defined by
\begin{align}
\bar a^f(t,x,v) &:= a_{d,\gamma}\int_{\R^d} \left( I - \frac w {|w|} \otimes \frac w {|w|}\right) |w|^{\gamma + 2} f(t,x,v-w) \dd w,\label{e:a}\\
\bar b^f(t,x,v) &:= b_{d,\gamma}\int_{\R^d} |w|^\gamma w f(t,x,v-w)\dd w,\label{e:b}\\
\bar c^f(t,x,v) &:= c_{d,\gamma}\int_{\R^d} |w|^\gamma f(t,x,v-w)\dd w, \label{e:c}
\end{align}
and $a_{d,\gamma}$, $b_{d,\gamma}$, $c_{d,\gamma}$ are positive constants. For $\gamma > -d$, the Landau equation arises as a limit of the Boltzmann equation as grazing collisions predominate. %(See \cite{alexandre2004landau}.) 
Although the case $d=3$, $\gamma = -3$ (where \eqref{e:c} must be replaced by $\bar c^f = c f$) is the most directly relevant as a physical model (see e.g. \cite{chapmancowling, lifschitzpitaevskii}), the cases $\gamma \in (-d,1]$ have also attracted a great deal of interest from a mathematical standpoint, partly motivated by the desire to better understand grazing collisions, which play a delicate role in the study of the non-cutoff Boltzmann equation. In this article, we are interested in the case of \emph{hard potentials}, i.e. $\gamma \in(0,1]$. (In fact, our results hold for any $\gamma \in (0,2)$, but $\gamma \in (0,1]$ is the case of interest in the literature.)

Functions of the form $c e^{-\alpha|v-v_0|^2}$ for $v_0\in \R^d$ and $c,\alpha >0$ (referred to as \emph{Maxwellians}) are equilibrium solutions of \eqref{e:main}. The goal of this article is to prove, under relatively weak \emph{a priori} assumptions, that solutions of \eqref{e:main} are bounded above and below by Maxwellians, and that these estimates depend only on physically meaningful quantities.  
%
%Furthermore, we look for estimates with constants that depend only on physically meaningful quantities. 
Define 
\begin{align*}
M_f(t,x) &= \int_{\R^d} f(t,x,v)\dd v, &&\mbox{(mass density)}\\
E_f(t,x) &= \int_{\R^d} |v|^2 f(t,x,v)\dd v, &&\mbox{(energy density)}\\
H_f(t,x) &= \int_{\R^d} f(t,x,v)\log f(t,x,v) \dd v. &&\mbox{(entropy density)}
\end{align*}
These hydrodynamic quantities corresponding to $f$ are physically observable at the macroscopic scale. We will assume throughout that for all $t\in [0,T]$ and $x\in \R^d$,
\begin{equation}\label{e:hydro}
 0< m_0 \leq M_f(t,x) \leq M_0, \quad E_f(t,x) \leq E_0, \quad \mbox{and} \quad H_f(t,x) \leq H_0,
 \end{equation}
for some constants $m_0$, $M_0$, $E_0$, $H_0$.  %It can be shown that $\|M_f(t,\cdot)\|_{L^1_x(\R^d)}$ and $\|E_f(t,\cdot)\|_{L^1_x(\R^d)}$ are conserved by \eqref{e:main}, and that $\|H_f(t,\cdot)\|_{L^1_x(\R^d)}$ is nonincreasing, but it is not known whether the property \eqref{e:hydro} is conserved by the equation (in the spatially inhomogeneous case), % the equation preserves solutions satisfying \eqref{e:hydro} ini. 
In the spatially homogeneous case (when $f$ is independent of $x$), the mass, energy, and entropy are functions of $t$ only, and it is known that mass and energy are conserved, and entropy is nonincreasing. These properties follow from the integral identities $\int_{\R^d} Q(f,f)\dd v = \int_{\R^d}|v|^2 Q(f,f) \dd v = 0$ and $\int_{\R^d}\log f Q(f,f) \dd v \leq 0$. (See, e.g. \cite[Chapter 1, Section 2]{villani2002review}). Therefore, assumption \eqref{e:hydro} would be unnecessary in the homogeneous case, as long the initial data has finite mass, energy, and entropy. But in the spatially inhomogeneous case considered here, the above integral identities only imply that the $L^1_x$ norms of $M_f$ and $E_f$ are conserved, and that the $L^1_x$ norm of $H_f$ is nonincreasing. It is not clear whether these hydrodynamic quantities can blow up in $L^\infty_x$ in finite time. Therefore, we include \eqref{e:hydro} as an assumption. We say a constant is \emph{universal} if it depends only on $d$, $\gamma$, $m_0$, $M_0$, $E_0$, and $H_0$. 

We will work with solutions satisfying 
\begin{equation}\label{e:G0} 
G_f(t,x) := \int_{\R^d} |v|^{\gamma+2} f(t,x,v) \dd v \leq G_0,
\end{equation}
for some $G_0  > 0$. This ensures that the matrix $\bar a^f$ in \eqref{e:a} is well-defined, but we will seek estimates that do not depend quantitatively on $G_0$. % For the right-hand side of \eqref{e:main} to be well-defined, we need to make the additional assumption that $G_f(t,x)$ is finite at every $t$ and $x$. For technical reasons, we assume 
Assumption \eqref{e:G0} would clearly be unnecessary in the case $\gamma \leq 0$.

We say that $f\geq 0$ satisfying \eqref{e:hydro} and \eqref{e:G0} is a weak solution of \eqref{e:main} if $f$, $\nabla_v f$, and $(\partial_t + v\cdot \nabla_x)f$ are in $L^2_{loc}([0,T]\times \R^{2d})$, and 
\[\int_{[0,T]\times \R^{2d}} \phi (\partial_t+v\cdot \nabla_x)f \dd v \dd x \dd t = \int_{[0,T]\times \R^{2d}} \left[-\bar a^f_{ij} \partial_{v_i} \phi \partial_{v_j} f + \phi(\bar b^f \cdot \nabla_v f + \bar c^f f)\right]\dd v \dd x \dd t.\]
for any $\phi \in H_0^1([0,T] \times \R^{2d})$, where repeated indices are summed over.

Our first main result gives pointwise Gaussian upper bounds for weak solutions:
\begin{theorem}\label{t:main}
	Let $\gamma \in (0,1]$, and let $f$ be a bounded weak solution to the Landau equation \eqref{e:main} on $[0,T]\times \R^{2d}$, satisfying \eqref{e:hydro} and \eqref{e:G0}. Then there exists a decreasing function $K: \R_+ \to \R_+$ with $K(t) \to \infty$ as $t\to 0+$, such that 
%	\[f(t,x,v) \leq C \left(e^{Ct^{-2/\gamma}} +1\right)e^{-\alpha |v|^2},\]
	\[f(t,x,v) \leq K(t) e^{-\alpha |v|^2}, \quad (t,x,v) \in (0,T]\times \R^{2d},\]
	with $K(t)$ and $\alpha>0$ depending on universal constants and $G_0$. %$d$, $\gamma$, $m_0$, $M_0$, $E_0$, $H_0$, and $G_0$.
	
	Furthermore, there exist a decreasing function $J$ and an increasing function $\beta$ from $\R_+\to \R_+$ with $J(t) \to \infty$ and $\beta(t) \to 0$ as $t\to 0+$, such that  
%	  \[f(t,x,v) \leq C \left(e^{Ct^{-2/\gamma}} +1\right)e^{-\beta t^?|v|^2},\]
	 \[f(t,x,v) \leq J(t) e^{-\beta(t)|v|^2}, \quad (t,x,v) \in (0,T]\times \R^{2d},\]
	  with $J(t)$ and $\beta(t)$ depending on universal constants. In particular, $J(t)$ and $\beta(t)$ are independent of $G_0$.
\end{theorem}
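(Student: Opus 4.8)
The strategy is to combine a local upper bound (De Giorgi–Nash–Moser-type $L^\infty$ estimate adapted to the kinetic Fokker–Planck structure of \eqref{e:main}) with a barrier argument that upgrades the local bound to a global Gaussian decay in $v$. The coefficients $\bar a^f$, $\bar b^f$, $\bar c^f$ are controlled from above and below on bounded regions by the hydrodynamic assumptions \eqref{e:hydro}: in particular \eqref{e:hydro} gives ellipticity $\bar a^f \gtrsim \langle v\rangle^\gamma \Id$ on the eigenspace-decomposed scale and an upper bound $\bar a^f \lesssim \langle v\rangle^{\gamma+2}$, while $|\bar b^f|, |\bar c^f| \lesssim \langle v\rangle^{\gamma+1}$, $\langle v\rangle^\gamma$ respectively (these are the standard coefficient estimates, presumably recorded earlier in the paper from the $m_0, M_0, E_0, H_0$ bounds). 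The key point for the \emph{second} statement — the $G_0$-independent bound — is that one must avoid ever using $G_0$ quantitatively, which forces us to localize in $v$ at scale $\langle v\rangle$ and only appeal to the $v$-integrals of $f$ against weights that are controlled by \eqref{e:hydro} alone.

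First I would establish a local $L^\infty$ bound: on a kinetic cylinder $Q_r(z_0)$ of the natural parabolic scaling (with $t$-radius $r^2$, $x$-radius $r^3$, $v$-radius $r$), one has $\sup_{Q_{r/2}} f \lesssim r^{-?}(1 + \|f\|_{L^2(Q_r)})$ or, better, a bound depending only on mass-type quantities, via the local regularity theory for kinetic equations with rough coefficients (Golse–Imbert–Mouhot–Vasseur / Imbert–Silvestre). Rescaling in $v$ around a point $v_0$ with $|v_0|$ large, the ellipticity constant degenerates like $|v_0|^\gamma$ but the cylinder shrinks, and careful bookkeeping gives a bound of the form $f(t,x,v_0) \le C(t)\langle v_0\rangle^{k}$ for some fixed power $k$, uniformly, using only \eqref{e:hydro}. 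This is the polynomial-decay starting point.

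Next, to get Gaussian decay I would run a barrier/maximum-principle argument. For the first (sharp exponent $\alpha$, $G_0$-dependent) statement, one builds a supersolution of the form $K(t) e^{-\alpha|v|^2}$ and checks that with $\bar a^f \le C\langle v\rangle^{\gamma+2}$ (here $G_0$ enters to bound $\bar a^f$ pointwise for \emph{all} $v$), $\bar b^f, \bar c^f$ controlled, the operator applied to this Gaussian is negative for $|v|$ large once $\alpha$ is small enough and $K$ large enough; the transport term $v\cdot\nabla_x$ vanishes on $v$-radial functions, and the $\partial_t$ term with $K$ decreasing helps. Comparison with the (already known, by hypothesis) bounded solution $f$, using the polynomial bound to dominate near $t=0$ after absorbing into $K(t)\to\infty$, closes it. For the second statement, since we may not use $G_0$ to bound $\bar a^f$ globally, I would instead use a \emph{time-dependent, slowly-spreading} Gaussian $J(t)e^{-\beta(t)|v|^2}$ with $\beta(t)\to 0$: the point is that $\bar a^f(t,x,v) \le C\langle v\rangle^{\gamma}(1 + |v|^2)$ can be split so that the "dangerous" part $\langle v\rangle^{\gamma+2}$ multiplying $\beta^2|v|^2 e^{-\beta|v|^2}$-type terms is absorbed by choosing $\dot\beta < 0$ with $|\dot\beta|$ large relative to $\beta^{1+\gamma/2}$; equivalently one integrates an ODE for $\beta(t)$ that keeps $\beta$ small but positive, trading off Gaussian width against time. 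One must also verify the $\langle v\rangle^{\gamma+2}$ upper bound on $\bar a^f$ restricted to the region $\beta(t)|v|^2 \lesssim 1$ can be had from \eqref{e:hydro} alone (outside that region the Gaussian is already tiny and a cruder polynomial bound on $f$ suffices to beat it).

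The main obstacle I anticipate is making the barrier argument rigorous for \emph{weak} (not classical) solutions — the comparison principle must be applied in the weak/energy formulation, testing the equation against $(f - \Phi)_+$ where $\Phi$ is the barrier, and carefully handling the cutoff in $v$ (since $\Phi$ decays but is not compactly supported, and $f$ is only assumed bounded, not decaying a priori) together with the behavior as $t\to 0^+$, where one has no control and must let $K(t), J(t)\to\infty$. A secondary technical difficulty is the $v$-dependent degeneracy of $\bar a^f$: the off-diagonal/diagonal split of $\bar a^f$ (it is $\lesssim \langle v\rangle^{\gamma+2}$ in the $v^\perp$ directions but only $\lesssim \langle v\rangle^\gamma \cdot(\text{something})$ in the $v/|v|$ direction — more precisely $\bar a^f \frac{v}{|v|}\cdot\frac{v}{|v|} \lesssim \langle v\rangle^\gamma$ using the second moment) must be exploited, because $e^{-\beta|v|^2}$ is most sensitive to second derivatives in the radial direction, and it is exactly there that $\bar a^f$ is \emph{small}. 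Getting this directional gain is what ultimately allows $\beta(t)$ to be taken independent of $G_0$.
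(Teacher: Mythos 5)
Your plan for the first estimate (the $G_0$-dependent Gaussian) matches the paper's: establish a polynomial, $G_0$-dependent pointwise bound via the change of variables and the local De Giorgi estimate of Golse--Imbert--Mouhot--Vasseur, then run a barrier argument with $\phi=e^{-\alpha|v|^2}$, exploiting exactly the anisotropy you describe — the positive radial second derivative of the Gaussian meets the small radial eigenvalue of $\bar a^f$ (of order $G_0+\langle v\rangle^\gamma$, whence $\alpha\sim G_0^{-1}$), while the negative angular part meets the large eigenvalue $\langle v\rangle^{\gamma+2}$, producing a good term $-CG_0^{-1}|v|^{\gamma+2}\phi$ for $|v|\gtrsim G_0^{1/2}$. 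The paper's barrier is $e^{C_0t^{-2/\gamma}}e^{-\alpha|v|^2}$, the prefactor chosen to dominate the polynomial bound on the complementary region $|v|^2\lesssim G_0 t^{-2/\gamma}$, and the comparison is done with the weak maximum principle as you anticipate. This part of your proposal is essentially the paper's argument.

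The gap is in the second statement. You propose to keep a barrier argument but avoid $G_0$ by localizing in $v$ and by ``verifying the $\langle v\rangle^{\gamma+2}$ upper bound on $\bar a^f$ restricted to the region $\beta(t)|v|^2\lesssim 1$ can be had from \eqref{e:hydro} alone.'' That verification fails, and no localization in $v$ can rescue it: $\bar a^f(t,x,v)$ at a single point $v$ integrates $f(t,x,z)$ against the weight $|v-z|^{\gamma+2}$ over \emph{all} of $\R^d_z$, and for $\gamma>0$ this weight grows strictly faster than $|z|^2$, so mass and energy do not control it; the quantity needed is precisely $G_f(t,x)=\int|v|^{\gamma+2}f\dd v$, which is why hypothesis \eqref{e:G0} is imposed at all. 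Likewise the radial bound is $\bar a^f_{ij}v_iv_j/|v|^2\lesssim E_0\langle v\rangle^\gamma+G_f$, not $\lesssim\langle v\rangle^\gamma$ from the second moment alone. The missing idea is the paper's Theorem \ref{t:gamma2}: interpolate the $G_0$-dependent Gaussian bound against the energy bound to obtain $G_f(t,x)\leq C_\eps t_*^{-1}G_0^{\mu}$ for $t\geq t_*$ with $\mu=\gamma/2+\eps<1$, and then — because the exponent on $G_0$ is strictly below one — iterate over a dyadic sequence of times to drive that exponent to zero, yielding a bound on $G_f$ depending only on universal constants and $t$. Only then is the first statement re-applied with $G_0$ replaced by this self-generated bound, which is what makes $\beta(t)$ degenerate as $t\to 0+$. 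Without some such bootstrap on the $(\gamma+2)$-moment itself, no barrier computation can remove $G_0$, since $G_0$ sits inside the coefficients at every point. (A minor further issue: your proposed $\dot\beta<0$ runs the wrong way; the theorem requires $\beta$ increasing in $t$, i.e.\ decay that improves with time.)
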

Explicit expressions for $K(t)$, $J(t)$, and $\beta(t)$ are given below. Note that all three functions are independent of the time of existence $T$. %We use ``decreasing'' and ``increasing'' in the non-strict sense. %In fact, $\beta(t)$ is bounded away from zero on $\{t \geq t_0\}$ for any $t_0>0$.

%(Compare to moderately soft)    Another advantage of our theorem over the case $\gamma \in (-2,0]$ covered in \cite{cameron2017landau} is that we do not require our solutions to be \emph{a priori} bounded in $L^\infty_v$.

%The strategy of the proof is as follows: first, following the approach of \cite{cameron2017landau}, we apply rescaled versions of the local $L^\infty$ estimates derived in \cite{golse2016}, and track how the constants behave as $|v|\to \infty$ to obtain a global upper bound in terms of $m_0$, $M_0$, $E_0$, $H_0$, and $G_0$. %This requires a change of variables in $v$ to deal with the degeneration of the ellipticity constants of the matrix $a_{ij}$ as $|v|\to \infty$.
%This gives pointwise polynomial decay in $v$, but in this case the upper bound depends on $G_0$. 
%Next, using a maximum principle argument, we prove Gaussian decay in $v$, with estimates depending on the same constants. %So far, all estimates involve constants depending on $G_0$, and we track this dependence carefully. 
%Finally, we use the Gaussian decay and the energy bound to obtain a small improvement on the bound of $G_f(t,x)$, and iterate this improvement to obtain a bound for 
The key step in the second statement of Theorem \ref{t:main} is finding an upper bound for $G_f(t,x)$ that is independent of $G_0$ (Theorem \ref{t:gamma2}). Since this upper bound does not depend on the initial data, it must blow up as $t\to 0$, which is why $\beta(t)$ in Theorem \ref{t:main} degenerates as $t\to 0$. 

%This bound on $G_f(t,x)$, combined with the estimate of \cite{golse2016} implies $f$ is H\"older continuous in its entire domain. 
By the estimate of \cite{golse2016}, $f$ is locally H\"older continuous in its entire domain. The Gaussian bounds of Theorem \ref{t:main} allow us to pass regularity of $f$ to regularity of the nonlocal coefficients (see \eqref{e:a} and \eqref{e:c}) and bootstrap Schauder estimates exactly as in \cite{henderson2017smoothing} %, Gaussian decay allows us to pass the regularity of $f$ to the regularity of the coefficients, and bootstrap Schauder estimates %of \cite{henderson2017smoothing} %(which require Gaussian decay in $v$) 
to conclude the solution is smooth:
%The bounds on $G_f(t,x)$ provided by Theorem \ref{t:gamma2} imply the coefficients in the divergence form of the Landau equation (\eqref{e:divergence} below) are bounded independently of $G_0$ on $[t_0,T]$ for any $t_0>0$. The main theorem of \cite{golse2016} then implies $f$ is H\"older continuous 
\begin{corollary}\label{c:smooth}
	Any bounded weak solution $f$ of \eqref{e:main} on $[0,T]\times \R^{2d}$, satisfying \eqref{e:hydro} and \eqref{e:G0}, is in $C^\infty((0,T]\times \R^{2d})$. Furthermore, all partial derivatives satisfy Gaussian upper bounds in $v$ that are uniform in $(t,x)\in [t_0,T]\times \R^{d}$ and depend only on universal constants, the order of the derivative, and $t_0$.
\end{corollary}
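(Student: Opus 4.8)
The plan is to combine the Gaussian upper bounds of Theorem~\ref{t:main} with the De Giorgi--Nash--Moser theory of \cite{golse2016} and the hypoelliptic Schauder bootstrap of \cite{henderson2017smoothing}. First I would record the standard ellipticity estimates for the nonlocal coefficients: assumption \eqref{e:hydro} implies that for universal constants $0<\lambda\le\Lambda$,
\[
\lambda\langle v\rangle^{\gamma}|\xi|^2 \le \bar a^f_{ij}(t,x,v)\xi_i\xi_j \le \Lambda\langle v\rangle^{\gamma+2}|\xi|^2, \qquad |\bar b^f(t,x,v)|+|\bar c^f(t,x,v)| \le \Lambda\langle v\rangle^{\gamma+1},
\]
so that on each compact set in $v$ the equation \eqref{e:main} is a uniformly parabolic kinetic Fokker--Planck equation with bounded measurable coefficients. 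Since $f$ is bounded (equivalently, by the first bound of Theorem~\ref{t:main}), \cite{golse2016} then gives that $f$ is locally H\"older continuous, in the kinetic Kolmogorov metric, throughout $(0,T]\times\R^{2d}$.

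The bootstrap is then run exactly as in \cite{henderson2017smoothing}. The key observation is that $\bar a^f$, $\bar b^f$, $\bar c^f$ are convolutions in $v$ of $f$ against the kernels $|w|^{\gamma+2}(I-\widehat w\otimes\widehat w)$, $|w|^\gamma w$, $|w|^\gamma$; using the Gaussian upper bound from Theorem~\ref{t:main} — and, to keep all constants independent of $G_0$, the second, $G_0$-independent bound $f\le J(t)e^{-\beta(t)|v|^2}$, for which $\beta(t_0)>0$ is universal once $t_0>0$ is fixed — these integrals converge with exponential room to spare, so one may freely differentiate or translate in $(t,x,v)$, transferring the operation onto $f$. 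Hence any interior $C^{k,\alpha}$ regularity of $f$ passes to the coefficients, with Gaussian-type bounds on their size and oscillation. Working near a point $v_0$ and rescaling velocities by $\langle v_0\rangle$ to restore unit ellipticity (the ratio $\langle v\rangle^{\gamma+2}/\langle v\rangle^\gamma=\langle v\rangle^2$ is otherwise unbounded), the rescaled coefficients are uniformly $C^\alpha$ in the kinetic metric, and the kinetic Schauder estimate of \cite{henderson2017smoothing} upgrades $f$ from $C^\alpha$ to $C^{2,\alpha}$; undoing the rescaling converts the gain into bounds on $\nabla_v f$, $\nabla_v^2 f$, and $(\partial_t+v\cdot\nabla_x)f$ with the same Gaussian $v$-decay, uniform for $(t,x)\in[t_0,T]\times\R^d$. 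Iterating — each gain in regularity of $f$ improving the coefficients, which improves $f$ again — yields $f\in C^\infty((0,T]\times\R^{2d})$. The Gaussian decay of the higher derivatives follows at each stage either directly from the localized Schauder estimate, whose right-hand side is controlled by a Gaussian via Theorem~\ref{t:main}, or by differentiating \eqref{e:main}, noting that $\partial^k f$ solves a kinetic Fokker--Planck equation whose lower-order and forcing terms carry Gaussian factors, and rerunning the barrier argument behind Theorem~\ref{t:main}; since the input is the $G_0$-independent bound, the resulting estimates depend only on universal constants, the order of the derivative, and $t_0$.

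The only genuinely delicate point — and it is already handled in \cite{henderson2017smoothing} — is that the Landau operator is not uniformly parabolic as $|v|\to\infty$, so the Schauder theory must be applied on rescaled cylinders adapted to the size of $v$ rather than globally, with the Gaussian bounds of Theorem~\ref{t:main} doing double duty: guaranteeing convergence and regularity-transfer of the nonlocal coefficients, and supplying the $v$-decay that survives the rescaling. With Theorem~\ref{t:main} available this reduces to bookkeeping, which is why the argument of \cite{henderson2017smoothing} applies essentially verbatim; accordingly I would not reproduce it in detail.
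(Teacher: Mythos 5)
Your proposal follows exactly the route the paper takes: local H\"older continuity from \cite{golse2016}, Gaussian decay from Theorem \ref{t:main} to transfer regularity of $f$ to the convolution coefficients $\bar a^f$, $\bar b^f$, $\bar c^f$, and then the rescaled kinetic Schauder bootstrap of \cite{henderson2017smoothing}, with the $G_0$-independent bound ensuring the final constants depend only on universal quantities, the order of the derivative, and $t_0$. The paper likewise treats this as a citation to \cite{henderson2017smoothing} rather than a detailed argument, so your write-up is, if anything, more explicit than the original.
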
   
%Unlike in the case of soft potentials, this smoothing result does not require any decay assumption on the initial data.
%we obtain uniform bounds  implies Since the Schauder estimates of \cite{henderson2017smoothing} require pointwise Gaussian decay in the velocity variable, 

Theorem \ref{t:main} and Corollary \ref{c:smooth} make no decay assumption on the initial data. This is in contrast to the corresponding results for soft potentials ($\gamma <0$), which require the initial data to satisfy Gaussian decay in $v$. (See \cite{cameron2017landau, henderson2017smoothing}.) Theorem \ref{t:main} extends a result in \cite{desvillettes2000landau} for the hard potentials case of the spatially homogeneous equation, which states that arbitrarily high moments of the solution are finite for $t>0$, with an upper bound that may degenerate as $t\to 0$.
%: Gaussian upper bounds in the initial data are propagated for positive times, but do not hold in general, without some decay assumption on the initial data (see \cite{cameron2017landau}). 
%
%upper bounds proportional to $|v|^{-1}$ hold for any bounded solution satisfying \eqref{e:hydro}, but polynomial decay in $v$ with exponent larger than $d+2$ (and in particular, Gaussian decay) cannot hold in general 
%
% Guassian decay in $v$ and smoothness require the assumption that the initial data has Gaussian decay \cite{cameron2017landau, henderson2017smoothing}. For general initial data, solutions have pointwise decay proportional to $|v|^{-1}$, and \emph{a priori} polynomial decay with power larger than $d+2$ cannot hold.% without any decay assumption on the initial data (see \cite{cameron2017landau}). 
% 

%We also show that Gaussian lower bounds in the initial data are propagated for positive times, with constants depending on $G_0$ (Theorem \ref{t:lower}).

Our last result gives lower Gaussian bounds in $v$. %, but unlike the upper bounds of Theorem \ref{t:main}, the decay constants vary with both $t$ and $x$ in general. However, if the initial data satisfies a Gaussian lower bound that is uniform in $x$, this uniform bound is propagated forward in time (with constants depending on $G_0$). 
Statement (a) is a propagating estimate, and statement (b) is a self-generating estimate analogous to Theorem \ref{t:main}:
\begin{theorem}\label{t:lower} Let $f$ be as in Theorem \ref{t:main}.
	\begin{enumerate}
		\item[(a)]  There exist $\alpha, \mu >0$ depending on universal constants and $G_0$, such that if $f(0,x,v) \geq c e^{-\alpha|v|^2}$ for all $(x,v)\in \R^{2d}$, then 
		\[f(t,x,v) \geq c e^{-\mu t} e^{-\alpha |v|^2}, \quad (t,x,v) \in [0,T]\times \R^{2d}.\]
	%	for all $(t,x,v) \in [0,T]\times \R^{2d}$.
		
	%	\item[(b)] For any $t_0 \in (0,T]$ and $x_0\in \R^d$, there exist $v_0\in \R^d$ and $\delta, \mu, \rho , r >0$ depending on $t_0$, $x_0$, and universal constants, such that 
%		\[ f(t,x,v) \geq \delta e^{-\mu (t-t_0)}e^{-\rho[1+ (t-t_0)^{-1}] |v-v_0|^2}, \quad t > t_0, \, |x-x_0|<r, \, v\in \R^d.\]
	%	with constants depending on 
		
		%(Note that (b) holds with no assumption on the initial data.) 
		
		\item[(b)] There exist an increasing function $\delta$ and a decreasing function $\omega$ from $\R_+$ to $\R_+$ with $\delta(t) \to 0$ and $\omega(t) \to \infty$ as $t\to 0+$, such that 
		\[f(t,x,v) \geq \delta(t) e^{-\omega(t)|v|^2}, \quad (t,x,v) \in (0,T]\times \R^{2d},\]
		with $\delta(t)$ and $\omega(t)$ depending only on universal constants.
		
%		If $f(0,x,v) \geq \delta > 0$ for $v\in B_r(v_0)$ and $x\in \R^d$, then there exist $\mu>0$ and a decreasing function $\omega:\R_+\to \R_+$ with $\omega(t)\to \infty$ as $t\to 0+$, such that %$f$ satisfies lower bounds of the form
%		\[f(t,x,v) \geq \delta e^{-\mu t} e^{-\omega(t)|v-v_0|^2}, \quad (t,x,v) \in (0,T]\times \R^{2d}.\] %for some $\delta'>0$.
%		The function $\omega(t)$ depends on universal constants and $r$, and $\mu$ depends on universal constants, $r$, and $G_0$. %depending on $v_0$ and universal constants
%	%	(Wrong time dependence? $G(t)$?)
	\end{enumerate}
\end{theorem}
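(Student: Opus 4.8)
The plan is to build both lower bounds from a local lower bound (a "spreading of positivity" estimate) combined with a barrier/comparison argument, mirroring the structure used for the upper bounds. First, for part (a), I would use the a priori control \eqref{e:hydro} to show that the mass density $M_f$ being bounded below, together with energy and entropy bounds, forces $f(t,x,\cdot)$ to have a nontrivial "core": there is a ball $B_R(0)$ in velocity and a set of velocities of definite measure on which $f(t,x,v)\geq \kappa$ for a universal $\kappa>0$. This is a standard consequence of the fact that a density with controlled mass, energy, and entropy cannot concentrate or spread out too much. Given such a core, I would run a De Giorgi-type or Harnack-type argument (using the fact that $f$ is a supersolution of a kinetic Fokker-Planck equation with coefficients $\bar a^f$ that are uniformly elliptic on bounded velocity regions — ellipticity of $\bar a^f$ from below follows from the lower mass bound as in \cite{cameron2017landau}) to spread this positivity to a full neighborhood in $(t,x,v)$, obtaining $f \geq c'$ on, say, $[\tau/2,\tau]\times B_r(x_0)\times B_{r}(v_0)$ for all base points. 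The kinetic/hypoelliptic nature of the equation means the spreading in $x$ uses the transport term, so the geometry of the cylinders must respect the scaling $x\sim v\cdot t$; this is by now routine (cf. \cite{golse2016}).

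With a uniform-in-$(t,x)$ lower bound $f(t,x,v)\geq c'$ on bounded velocity sets for $t \geq \tau$, I would then propagate the Gaussian decay at spatial infinity in $v$ by constructing a subsolution barrier of the form $\underline f(t,v) = A(t) e^{-\alpha|v|^2}$ and invoking the comparison principle for \eqref{e:main}. The key computation is to check that for $\alpha$ large enough (depending on the upper ellipticity bound $\tr \bar a^f \lesssim G_0$ and the Gaussian upper bound on $f$ from Theorem \ref{t:main}, which controls $\bar b^f$ and $\bar c^f$), the function $A(t)e^{-\alpha|v|^2}$ is a genuine subsolution: plugging into the operator, the second-order term produces $(4\alpha^2 v^T\bar a^f v - 2\alpha \tr\bar a^f)A e^{-\alpha|v|^2}$, and the worst case is when $\bar a^f$ degenerates in the $v$ direction (which it does — recall $\bar a^f v$ decays), so one must use the lower bound on the eigenvalue of $\bar a^f$ in the $v/|v|$ direction, which is of order $\langle v\rangle^\gamma$; combined with the $\bar b^f\cdot\nabla_v$ and $\bar c^f$ terms one gets a differential inequality $A'(t) \geq -\mu A(t)$ for suitable universal $\mu$ once $\alpha$ depends on $G_0$. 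Since $f(0,\cdot)\geq c e^{-\alpha|v|^2}$ by hypothesis and $f\geq c'$ on bounded sets for positive times, the maximum principle gives $f \geq c e^{-\mu t}e^{-\alpha|v|^2}$ on all of $[0,T]\times\R^{2d}$.

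For part (b) the strategy is the same except that the input Gaussian at $t=0$ is unavailable, so I replace it by the self-generated core: at any small time $t_0>0$, the mass/energy/entropy bounds already give $f(t_0,x,v)\geq c' \1_{B_R}(v)$ (no decay hypothesis needed). Then I run the barrier argument starting from time $t_0$, with the subsolution $\underline f(t,v) = \delta(t) e^{-\omega(t)|v|^2}$ where now $\omega(t)$ is allowed to be large for small $t$ and to relax; the point is that to dominate a compactly supported lower bound by a Gaussian one pays a factor $e^{-\omega(t_0)R^2}$, forcing $\omega(t_0)$ large and $\delta(t_0)$ small, hence the stated degeneration as $t\to 0$. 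Because the self-generated core $c'$, $R$ depend only on universal constants (via \eqref{e:hydro}), and the barrier's ODE constants are universal except for the $\alpha$-vs-$G_0$ issue — which I resolve by letting $\omega(t)$ decrease in time so that the relevant threshold is met with only universal dependence after a short transient — the resulting $\delta$ and $\omega$ are universal. Optimizing the choice of $t_0 = t_0(t)$ and tracking how $\omega$ must vary gives the explicit increasing $\delta$ and decreasing $\omega$.

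The main obstacle I anticipate is the degeneracy of $\bar a^f$ in the $v$-direction at large $|v|$: the diffusion that should prevent $f$ from decaying faster than a Gaussian is weakest precisely where the Gaussian is smallest, so the subsolution computation is delicate and forces the coupling between $\alpha$ and $G_0$ in part (a). The secondary obstacle is making the spreading-of-positivity estimate uniform in $x\in\R^d$ with no confinement — this requires that the De Giorgi / weak Harnack constants depend only on the ellipticity ratio of $\bar a^f$ on bounded sets, which in turn depends only on universal constants and (through the upper eigenvalue bound) on quantities already controlled by Theorem \ref{t:main}; checking this carefully, respecting the kinetic scaling, is where most of the technical work lies.
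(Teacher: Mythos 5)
Your overall strategy---Gaussian sub-solution barriers plus the maximum principle, with the key computation balancing the degenerate radial lower ellipticity $c_1(1+|v|)^{\gamma}$ against the $G_0(1+|v|)^{\gamma+2}$ upper bound, and a Harnack-type spreading step to produce a core $f\geq \delta$ on a velocity ball---is essentially the paper's. Two remarks on part (a). First, the core/spreading machinery is unnecessary there: the paper makes $e^{-\mu t}e^{-\alpha|v|^2}$ a \emph{global} subsolution by taking $\mu$ large enough to absorb the bounded-$|v|$ region (where $|\bar a^f_{ij}\partial_{ij}\psi_1|\lesssim G_0 R_0^{\gamma+4}\psi_1$), so the comparison runs on all of $[0,T]\times\R^{2d}$ directly from the initial data. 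Second, as you have set it up, the region $\{|v|\leq R\}\times(0,\tau)$ is covered neither by the hypothesis at $t=0$ nor by the core (which only kicks in after a Harnack waiting time $\tau$), so your comparison domain has an uncontrolled piece of lateral boundary at small times; the large-$\mu$ trick is precisely what closes this.

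The genuine gap is in part (b), in the claim that $\delta(t)$ and $\omega(t)$ are universal. The subsolution inequality for a Gaussian of rate $\omega$ requires $\omega\gtrsim \Lambda/\lambda$, where the upper ellipticity constant $\Lambda$ of $\bar a^f$ at time $t$ is controlled by $\int |v|^{\gamma+2}f(t,x,v)\,\dd v$, and a priori this is only $\leq G_0$. ``Letting $\omega(t)$ decrease in time so that the relevant threshold is met with only universal dependence after a short transient'' is not a mechanism: the threshold at each time $t$ is set by the $(\gamma+2)$-moment at that time, and nothing in your argument shows this moment drops below $G_0$ to a universal quantity. The missing ingredient is the self-generated, $G_0$-independent bound $G_f(t,x)\leq C_\eps(1+t^{-1/(1-\gamma/2-\eps)})$ of Theorem \ref{t:gamma2} (equivalently, the second statement of Theorem \ref{t:main}), which the paper feeds into the barrier computation on $\{t\geq t_0,\ |v|\geq r/2\}$ with $\omega(t)=\rho[1+(t-t_0)^{-1}]$; the factor $(t-t_0)^{-1}$ also serves the structural purpose of making the barrier vanish on the initial slice of the exterior region, not merely of dominating a compactly supported core. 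Relatedly, your assertion that the hydrodynamic bounds ``already give $f(t_0,x,v)\geq c'\1_{B_R}(v)$'' pointwise is too strong: they give only that $\sup_{B_r}f(t_1,x,\cdot)$ is bounded below (the paper also needs the $L^\infty$ bound of Theorem \ref{t:upper} to keep $r$ from degenerating), and the pointwise infimum on a ball is then obtained by applying the Harnack inequality over a strictly positive time increment.
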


Theorem \ref{t:lower}(b) is also false for soft potentials, where the optimal generating lower bounds have decay proportional to $e^{-\alpha |v|^{2+|\gamma|}}$. (See \cite{HST2017landau}.)

\subsection{Related work}

The starting point of conditional regularity for \eqref{e:main} under the assumption \eqref{e:hydro} is the work of Golse-Imbert-Mouhot-Vasseur \cite{golse2016}, who proved a Harnack inequality and local $C^\alpha$ estimate for weak solutions of the Landau equation under the assumption \eqref{e:hydro}. % for a class of divergence-form kinetic equations that includes the linearized Landau equation. 
(See also the earlier, related estimates % were obtained for general classes of ultraparabolic equations with rough coefficients that do not contain Landau, 
by Pascucci-Polidoro \cite{pascucci2004ultraparabolic} and Wang-Zhang \cite{Wang2009ultraparabolic}.) In the case $\gamma >0$, the estimate of \cite{golse2016} depends on a quantitative upper bound for $G_f(t,x)$. %($\gamma+2$) moment of $f$, so our results in particular allow one to remove this dependence in the local estimates. 
In the case of moderately soft potentials ($\gamma \in (-2,0]$), the present author, jointly with Cameron and Silvestre \cite{cameron2017landau}, used the local estimate of \cite{golse2016} to derive global estimates for weak solutions satisfying \eqref{e:hydro}, and found \emph{a priori} pointwise decay proportional to $(1+|v|)^{-1}$. It was also shown in \cite{cameron2017landau} that  polynomial decay in $v$ with exponent greater than $d+2$ cannot hold for arbitrary initial data, and in particular, Gaussian decay in $v$ cannot hold in general. %This is in contrast to the result in this paper for $\gamma \in (0,1]$. 
It is not clear how to bridge this gap between the known and optimal \emph{a priori} decay, but by our Theorem \ref{t:main}, there is no such gap when $\gamma \in (0,1]$. %(Lower bounds?)

In the same context of $\gamma\in (-2,0]$ and weak solutions satisfying \eqref{e:hydro}, $C^\infty$ smoothing in all three variables was established by Henderson-Snelson \cite{henderson2017smoothing}, for initial data with Gaussian decay. A similar result holds for very soft potentials ($\gamma\in [-d,-2]$), %but estimates depend additionally on the $L^\infty$ norm and a higher moment of $f$. 
with an additional higher integrability assumption on $f$. See also Imbert-Mouhot \cite{imbert2018toymodel} for a smoothing result on a related kinetic model. %Even in the spatially homogeneous case, it is not known how to derive $L^\infty$ bounds in terms of physical quantities when $\gamma < -2$. 
In \cite{HST2017landau}, Henderson-Snelson-Tarfulea derived pointwise lower bounds for solutions with mass and energy densities bounded above (for $\gamma \in (-2,0)$), which implies that the lower bound on the mass and the upper bound on the entropy can be removed from the criteria for smoothness and continuation.  %Our Theorem \ref{t:lower} shows that %Compared to the lower bounds of \cite{HST2017landau}, our Theorem \ref{t:lower} gives more precise information on the behavior as $|v|\to \infty$, but we are working with stronger \emph{a priori} assumptions on $f$ that ensure the right-hand side of \eqref{e:main} is coercive. 
It should also be possible to remove these two assumptions %that $M_f(t,x)$ is bounded below and $H_f(t,x)$ is bounded above 
from the results in the current paper, but we do not explore this here.

%are part of a program to find conditions under which solutions are regular and can be continued past a given time. This began with 

Global existence for the inhomogeneous Landau equation has only been shown for perturbative initial data.  If $f_{in}$ is sufficiently close to Maxwellian equilibrium state $M$, a solution exists for all time and converges to $M$ as $t\to\infty$. This result was found by Guo \cite{guo2002periodic} and later improved by many other authors, see \cite{carrapatoso2016cauchy, mouhot2006equilibrium, strain2008exponential, carrapatoso2017} and the references therein. Recently, Luk \cite{luk2018vacuum} constructed global solutions to \eqref{e:main} for $\gamma \in (-2,0)$ with initial data close to the vacuum state $f\equiv 0$, and showed that these solutions scatter to zero as $t\to\infty$. 
%The spatially homogeneous case, where the solution of \eqref{e:main} is assumed to be independent of $x$, is also better understood than the general case.
Global existence with general initial data remains a challenging open issue (for any value of $\gamma$), but our results imply that a smooth solution exists for as long as the hydrodynamic quantities stay under control as in \eqref{e:hydro}.

In the spatially homogeneous case, where the solution is assumed to be independent of $x$ and the equation has a parabolic structure, much more is known. In this regime, the Landau equation with hard potentials was studied in detail by Desvillettes-Villani \cite{desvillettes2000landau}, who showed global existence and smoothness of solutions with suitable initial data, as well as self-generating polynomial moments and Maxwellian lower bounds. The corresponding study for $\gamma = 0$ was done by Villani \cite{villani1998landau}. Global solutions are also known to exist for $\gamma \in [-2,0)$, see Alexandre-Liao-Lin \cite{alexandre2015apriori} and Wu \cite{Wu2014global}. Generally speaking, when $\gamma \leq 0$, decay of the solution in $v$ (such as higher moments or Gaussian upper bounds) is propagated forward in time, and when $\gamma > 0$, decay is self-generating. A similar dichotomy holds for the inhomogeneous Landau equation, as shown by our Theorem \ref{t:main} and the results of \cite{cameron2017landau}. For other existence and regularity results for the spatially homogeneous case, see \cite{desvillettes2015landau, desvillettes2016general, FG2009Landau, fournier2010coulomb, gualdani2014radial, gualdani2017landau, silvestre2015landau} and the references therein.

%Regarding long-time behavior, it is well understood that solutions of \eqref{e:main} starting sufficiently close to an equilibrium state converge to equilibrium as $t\to \infty$: see 

%For the full inhomogeneous equation, the starting point for conditional regularity 

 %The generating lower bounds of \cite{HST2017landau} have decay proportional to $e^{-\alpha |v|^{2+|\gamma|}}$, which is in fact optimal for soft potentials.

 % we are working with stronger \emph{a priori} assumptions on $f$ that ensure the right-hand side of \eqref{e:main} is coercive. 
%At least in terms of asymptotic behavior for large $|v|$, our Theorem \ref{t:lower} is stronger than the generating lower bounds of \cite{HST2017landau}, which have decay proportional to $e^{-\alpha |v|^{2+|\gamma|}}$. %which is a weaker asymptotic lower bound than our Theorem \ref{t:lower}. 
%The same discrepancy between hard and soft potentials appears in the spatially homogeneous equation (see \cite{desvillettes2000landau}).

Regarding the long-time behavior of large-data solutions to \eqref{e:main}, the famous paper of Desvillettes-Villani \cite{desvillettes2005trend} found that \emph{a priori} global solutions with sufficient smoothness and decay %(uniform in $t$) 
converge almost exponentially to Maxwellians.\footnote{Only the Coulomb case $(\gamma = -3)$ is considered in \cite{desvillettes2005trend}, but one expects that similar techniques can handle other values of $\gamma$, including $\gamma\in (0,1]$.} By the results in the current paper, any global solution satisfying \eqref{e:hydro} with $T=\infty$ satisfies the decay and smoothness hypotheses of \cite{desvillettes2005trend} on $[t_0,\infty)$ for any $t_0>0$.

%with sufficient smoothness and decay will converge to Maxwellians as $t\to \infty$. See, in particular, the famous paper of  for general initial data. 

\subsection{Notation} 
%Equation \eqref{e:main} can be written in divergence form
%\begin{equation}\label{e:divergence}
%\partial_t f + v\cdot \nabla_x f = \nabla_v\cdot \left[\bar a^f(t,x,v)\nabla_v f\right] + \bar b^f(t,x,v)\cdot\nabla_v f + \bar c^f(t,x,v) f,
%\end{equation}
%or in nondivergence form
%\begin{equation}\label{e:nondivergence}
%\partial_t f + v\cdot \nabla_x f = \tr\left[\bar a^f(t,x,v)D_v^2 f\right] + \bar c^f(t,x,v) f,
%\end{equation}
%with the nonlocal coefficients $\bar a^f(t,x,v)\in \R^{d\times d}$, $\bar b^f(t,x,v) \in \R^d$, and $\bar c^f(t,x,v)\in \R$ defined by
%\begin{align}
%\bar a^f(t,x,v) &:= a_{d,\gamma}\int_{\R^d} \left( I - \frac w {|w|} \otimes \frac w {|w|}\right) |w|^{\gamma + 2} f(t,x,v-w) \dd w,\label{e:a}\\
%\bar b^f(t,x,v) &:= b_{d,\gamma}\int_{\R^d} |w|^\gamma w f(t,x,v-w)\dd w,\label{e:b}\\
%\bar c^f(t,x,v) &:= c_{d,\gamma}\int_{\R^d} |w|^\gamma f(t,x,v-w)\dd w, \label{e:c}
%\end{align}
%for some constants $a_{d,\gamma}$, $b_{d,\gamma}$, and $c_{d,\gamma}$. The divergence form of the equation is more convenient for applying local De Giorgi type estimates, and the nondivergence form is more convenient for applying the maximum principle, so we will use both.

Sometimes, we will write $z=(t,x,v) \in [0,T]\times \R^d\times\R^d$. Because of the symmetry properties of the equation, the most convenient sets on which to study local estimates are twisted cylinders of the form
\[Q_r(z_0) = \{z\in \R^{2d+1}: t_0-r^2 < t\leq  t_0, |x-x_0 - (t-t_0)v_0|< r^3, |v-v_0|< r\},\]
for some $z_0 = (t_0,x_0,v_0)$ and $r>0$. We also write $Q_r = Q_r(0,0,0)$. %We say a constant is \emph{universal} if it depends only on $d$, $\gamma$, $m_0$, $M_0$, $E_0$, and $H_0$
We use the notation $A\lesssim B$ when $A\leq CB$ for a universal constant $C$, and $A\approx B$ when $A\lesssim B$ and $B\lesssim A$.

\section{Preliminaries}

In this section, we extend the pointwise upper bounds of \cite{cameron2017landau} to the case $\gamma \in (0,1]$. The proofs are similar to \cite{cameron2017landau}, but we must pay careful attention to the dependence of all constants on $G_0$. We will assume $G_0 \geq 1$, since we want to find an upper bound for $G_f(t,x)$. 
%Our upper bounds are slightly stronger than \cite{cameron2017landau} because we do not require our solutions to be \emph{a priori} bounded. This is because, unlike in the case $\gamma\in (-2,0)$, one needs the 
First, we establish estimates on the coefficients in \eqref{e:main}, relative to our assumptions on $f$: 
\begin{lemma}\label{l:abc}
Let $f$ satisfy \eqref{e:hydro} and \eqref{e:G0}. Then there exist universal constants $c_1$, $C_1$, $C_2$ $C_3$ such that 
\begin{align*}
 \bar a^f_{ij}(t,x,v) e_i e_j &\geq c_1\begin{cases} (1+|v|)^{\gamma}, & e\in \mathbb S^{d-1}, \\
(1+|v|)^{\gamma+2},& e\cdot v = 0,\end{cases}\\
 \bar a^f_{ij}(t,x,v) e_i e_j &\leq G_0 + C_1 \begin{cases} (1+|v|)^{\gamma+2}, &e\in \mathbb S^{d-1},\\
(1+|v|)^{\gamma},  & e\cdot v = |v|,\end{cases}\\
|\bar b^f(t,x,v)| &\leq C_2(1+|v|)^{\gamma+1},\\
\bar c^f(t,x,v) &\leq C_3(1+|v|)^\gamma.
\end{align*} 
\end{lemma}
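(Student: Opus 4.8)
The plan is to reduce all four estimates to the substitution $u=v-w$ in \eqref{e:a}--\eqref{e:c} together with elementary bounds, invoking \eqref{e:hydro} beyond the mass and energy controls only for the coercivity (lower bounds) of $\bar a^f$.

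\textbf{Upper bounds.} After the substitution, $\bar c^f(v)=c_{d,\gamma}\int_{\R^d}|v-u|^{\gamma}f(u)\dd u$, $|\bar b^f(v)|\le b_{d,\gamma}\int_{\R^d}|v-u|^{\gamma+1}f(u)\dd u$, and, since $\big(I-\tfrac w{|w|}\otimes\tfrac w{|w|}\big)e\cdot e\le 1$ for $e\in\mathbb S^{d-1}$, also $\bar a^f_{ij}(v)e_ie_j\le a_{d,\gamma}\int_{\R^d}|v-u|^{\gamma+2}f(u)\dd u$. For $p>0$ split $\int|v-u|^{p}f(u)\dd u$ over $\{|u|\le 1+|v|\}$, where $|v-u|^{p}\lesssim(1+|v|)^{p}$ and the contribution is $\lesssim M_0(1+|v|)^{p}$, and over $\{|u|>1+|v|\}$, where $|v-u|\le 2|u|$ and the contribution is $\lesssim\int_{\R^d}|u|^{p}f(u)\dd u$. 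When $p\le 2$ this last integral is $\le M_0+E_0$ (the cases $p=\gamma$ and $p=\gamma+1$, since $\gamma\le1$); when $p=\gamma+2$ it equals $G_f\le G_0$. This gives the bounds for $\bar b^f$ and $\bar c^f$ and the $e\in\mathbb S^{d-1}$ case for $\bar a^f$. For $e=v/|v|$, the identity $|w|^{\gamma+2}\big(1-(\tfrac w{|w|}\cdot e)^2\big)=|w|^{\gamma}|w-(w\cdot e)e|^2$ together with $w-(w\cdot e)e=-(u-(u\cdot e)e)$ when $w=v-u$ gives $\bar a^f_{ij}(v)e_ie_j\le a_{d,\gamma}\int_{\R^d}|v-u|^{\gamma}|u|^2f(u)\dd u$, and the same split produces $\lesssim E_0(1+|v|)^{\gamma}+G_0$. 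Every prefactor is universal apart from the displayed $G_0$.

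\textbf{Lower bounds.} These reduce to the standard coercivity input extracted from \eqref{e:hydro} (cf.\ \cite{cameron2017landau,golse2016}): there exist universal $R,\ell,\sigma_0>0$ such that for every $(t,x)$ the set $S=\{v\in B_R:f(t,x,v)>\ell\}$ has $|S|\ge\sigma_0$. Briefly: with $R:=\sqrt{2E_0/m_0}$, Chebyshev gives $\int_{B_R}f\dd v\ge m_0-E_0/R^2=m_0/2$; taking $\ell:=m_0/(4|B_R|)$ leaves $\int_S f\dd v\ge m_0/4$; and the entropy bound, via the standard estimate $\int_{\R^d}f|\log f|\dd v\lesssim H_0+E_0+1$, bounds $\int_S f\log f\dd v$ by a universal constant, so Jensen's inequality for $s\mapsto s\log s$ against the uniform probability measure on $S$ forces $|S|\ge\sigma_0$. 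Now fix $e\in\mathbb S^{d-1}$; discarding the part of the integral \eqref{e:a} over $\{w:v-w\notin S\}$ and using $f>\ell$ on $S$,
\[\bar a^f_{ij}(v)e_ie_j\ \ge\ a_{d,\gamma}\,\ell\int_S|v-u|^{\gamma}\,\big|(v-u)-((v-u)\cdot e)e\big|^2\dd u.\]
If $|v|\ge R_1$ for a universal threshold $R_1$, then $|v-u|\approx 1+|v|$ on $B_R$, while $\big|(v-u)-((v-u)\cdot e)e\big|$ is small only when $u$ lies in a thin cylinder about the line through $v$ in direction $e$; since $d\ge 2$, that cylinder meets $B_R$ in measure tending to $0$ with its radius, so for a suitable universal radius the set $S$ minus this cylinder still has measure $\gtrsim\sigma_0$ and there $\big|(v-u)-((v-u)\cdot e)e\big|\gtrsim 1$, giving $\bar a^f_{ij}(v)e_ie_j\gtrsim(1+|v|)^{\gamma}$. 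If moreover $e\cdot v=0$, then $\big|(v-u)-((v-u)\cdot e)e\big|^2=|v-u|^2-(u\cdot e)^2\ge|v-u|^2-|u|^2\gtrsim(1+|v|)^2$ on $B_R$, upgrading the bound to $\bar a^f_{ij}(v)e_ie_j\gtrsim(1+|v|)^{\gamma+2}$. For the compact range $|v|\le R_1$, the displayed integral restricted to $S$ minus $B_{r_0}(v)$ and minus a thin cylinder about the line through $v$ in direction $e$ (both removals of small universal measure) is $\gtrsim 1\approx(1+|v|)^{\gamma}\approx(1+|v|)^{\gamma+2}$.

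\textbf{Main obstacle.} The only substantive step is the concentration estimate for $S$: this is the sole place where the entropy bound and the lower mass bound are used, and the delicate point — as throughout this section — is that $R$, $\ell$, $\sigma_0$, and therefore all the lower bounds, come out \emph{universal}, with no dependence on $G_0$. Everything else is routine, the one genuine feature being that $G_0$ must enter (additively, with a universal prefactor) the upper bound for $\bar a^f$: since $\gamma+2>2$, the quantity $\int_{\R^d}|u|^{\gamma+2}f\dd u=G_f$ is not controlled by mass and energy alone.
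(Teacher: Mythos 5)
Your proposal is correct and follows essentially the same route as the paper: the upper bounds use the same elementary splitting of the convolution integrals (with $G_0$ absorbing the $(\gamma+2)$-moment) and the same algebraic identity reducing the quadratic form in the direction $e=v/|v|$ to $\int|v-u|^{\gamma}\bigl(|u|^2-(u\cdot e)^2\bigr)f(u)\dd u$. For the lower bounds the paper simply cites \cite[Proposition 4]{desvillettes2000landau} and \cite[Lemma 3.1]{silvestre2015landau}; the coercivity argument you write out (mass/energy/entropy concentration of $f$ on a set $S$ of universal measure, then removal of a thin cylinder and, for $\gamma>0$, a small ball around $v$) is precisely the standard argument contained in those references, with the correct observation that the resulting constants are universal and independent of $G_0$.
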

%(Note that $C_2$ is independent of $\|f\|_{L^\infty}$, unlike in the $\gamma<0$ case.)
\begin{proof} For this proof, the dependence of $f$, $\bar a^f$, $\bar b^f$, and $\bar c^f$ on $t$ and $x$ is irrelevant, so we will write $f(v)$, etc. 

The lower bounds for $\bar a^f_{ij}$ are proven in \cite[Proposition 4]{desvillettes2000landau} in the case $d=3$. The same conclusion for arbitrary dimension $d$ and $\gamma \in [-d,0)$ is established in \cite[Lemma 3.1]{silvestre2015landau}, with an argument that does not use the sign of $\gamma$ in any essential way. As such, we omit the proof of the lower bounds.
	
	For the upper bounds on $\bar a^f_{ij}$, let $e\in \mathbb S^{d-1}$ be arbitrary. From \eqref{e:a}, we have
	\begin{align*}
	\bar a^f_{ij}(v)e_i e_j &\lesssim %\int_{\R^d} |w|^{\gamma+2} f(v-w)\dd w\\
	 \int_{\R^d} |v-z|^{\gamma+2} f(z)\dd z\\
	&\lesssim \int_{\R^d} (|v|^{\gamma+2} + |z|^{\gamma+2})f(z)\dd z\\
	&\lesssim  M_0(1+|v|^{\gamma+2}) +G_0.
	\end{align*}
Next, if $e\in \mathbb S^{d-1}$ is parallel to $v$, then
\begin{align*}
%\int_{\R^d} \left(1 - \left(\frac {w\cdot e}{|w|}\right)^2\right)|w|^{\gamma+2} f(v-w)\dd w 
\bar a^f_{ij}(v) e_i e_j & \lesssim \int_{\R^d} \left(1 - \left(\frac {(v-z)\cdot e}{|v-z|}\right)^2\right)|v-z|^{\gamma+2} f(z)\dd z\\
%&=  \int_{\R^d} \left(|v-z|^2 - \left(|v|-z\cdot e\right)^2\right)|v-z|^{\gamma} f(z)\dd z\\
&=  \int_{\R^d} \left(|z|^2 - (z\cdot e)^2\right)|v-z|^{\gamma} f(z)\dd z\\
%&\leq  \int_{\R^d} |z|^2 |v-z|^{\gamma} f(z)\dd z\\
&\lesssim \int_{\R^d}(|v|^{\gamma}|z|^2+ |z|^{\gamma+2})f(z)\dd z\\
&\lesssim E_0 (1+|v|)^\gamma + G_0.  
\end{align*}

For $\bar b^f$, since $\gamma \leq 1$, we have
\[|\bar b^f(v)| \leq \int_{\R^d} |v-z|^{\gamma+1} f(z)\dd z \lesssim \int_{\R^d}(|v|^{\gamma+1} + |z|^{\gamma+1}) f(z) \dd z \lesssim M_0 (1+|v|)^{\gamma+1} + E_0.\]
The bound on $\bar c^f$ follows by a similar calculation.
\end{proof}

Next, we quote a theorem of \cite{golse2016} that gives a local $L^\infty$ estimate for weak solutions of
	\begin{equation}\label{e:FP}
	\partial_t g + v\cdot \nabla_x g = \nabla_v \cdot(A\nabla_v g) + B \cdot\nabla_v g + s
	\end{equation}
where $A$, $B$, and $s$ are bounded and measurable, and $A$ is uniformly elliptic. When we apply this estimate, the constant will depend on $G_0$ via the upper ellipticity constant $\Lambda$ (see Lemma \ref{l:abc}). To determine the dependence on $\Lambda$, which is not explicitly stated in \cite{golse2016}, we must follow the proof in that article and keep track of the constant $\Lambda$ at every step. This straightforward but tedious task is outlined in Appendix \ref{s:A}. 
%In order to keep track of the constant $G_0$, we need to estimate how the constant in the following theorem depends on the upper ellipticity constant $\Lambda$. 
%This dependence is not explicitly stated in \cite{golse2016}, but it is straightforward to verify that the constant is bounded by some polynomial factor of $\Lambda$, which is precise enough for our purposes.
\begin{theorem}\label{t:local}
	Let $g$ be a weak solution to \eqref{e:FP} in $Q_1$. Then there holds
	\begin{equation}\label{e:degiorgi}
	\|g\|_{L^\infty(Q_{1/2})} \leq C \left(\|g\|_{L^2(Q_1)} + \|s\|_{L^\infty(Q_1)}\right),
	\end{equation}
	where $C = C'(1+\Lambda)^P$ for some $P>0$ depending only on the dimension $d$, and $C'$ depending only on $d$ and  $\lambda$.
\end{theorem}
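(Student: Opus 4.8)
The plan is to prove \eqref{e:degiorgi} by rerunning the De Giorgi argument of \cite{golse2016} while carrying the upper ellipticity constant $\Lambda$ as an explicit parameter through every intermediate estimate. The only way $\Lambda$ can enter is through the bounds $|A|\le\Lambda$, $|B|\le\Lambda$ on the (now bounded) coefficients, whereas the lower ellipticity $\lambda$ and the dimension $d$ are fixed; thus $C'$ may depend on $\lambda$ and $d$, but the $\Lambda$-dependence must be shown to be polynomial. It is convenient to dispose of the absolute value and of the source term first. Applying the argument to $-g$ in place of $g$ (which solves an equation of the same form \eqref{e:FP} with the same ellipticity constants) reduces matters to bounding $\sup_{Q_{1/2}}g$ from above. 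Then, replacing $g$ by $\tilde g := g-(t+1)\|s\|_{L^\infty(Q_1)}$ --- an affine-in-$t$, $v$-independent shift, which leaves the $A$- and $B$-terms untouched --- produces a solution of \eqref{e:FP} with source $\tilde s := s-\|s\|_{L^\infty(Q_1)}\le0$; since $0<t+1\le1$ on $Q_1$ we have $|g-\tilde g|\le\|s\|_{L^\infty(Q_1)}$ and $\|\tilde g\|_{L^2(Q_1)}\lesssim\|g\|_{L^2(Q_1)}+\|s\|_{L^\infty(Q_1)}$. So from now on $s\le0$, and $(g-k)_+$ is a subsolution of the homogeneous equation for every $k\ge0$.

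The first ingredient is the energy (Caccioppoli) inequality. Taking nested twisted cylinders $Q_1\supset\cdots\supset Q_n\supset Q_{n+1}\supset\cdots\downarrow Q_{1/2}$ with adapted cutoffs $\chi_n$ and testing the subsolution inequality against $\chi_n^2(g-k)_+$, ellipticity from below gives an estimate of the schematic form
\[
\sup_t\int\chi_n^2(g-k)_+^2\dd x\dd v+\int\big|\nabla_v[\chi_n(g-k)_+]\big|^2\dd z\ \lesssim\ (1+\Lambda)\,2^{cn}\!\int_{Q_n}(g-k)_+^2\dd z,
\]
where the transport term is dealt with exactly as in \cite{golse2016} (it vanishes against $\chi_n^2(g-k)_+$ after the usual symmetrization, or is absorbed because $\chi_n$ is adapted to the twisted geometry). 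The essential feature is that $\Lambda$ enters here only as the overall factor $1+\Lambda$.

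The second ingredient is the gain of integrability. With $h_n:=\chi_n(g-k)_+$, the energy bound places $h_n,\nabla_v h_n$ in $L^2_z$ and $(\partial_t+v\cdot\nabla_x)h_n$ in the appropriate negative-order velocity space, with norms controlled by the right-hand side above; the velocity-averaging lemma of \cite{golse2016} then upgrades $h_n$ to $L^{2+\eps_0}(Q_{n+1})$ for a dimensional $\eps_0>0$, the $\Lambda$-dependence being inherited linearly from the energy estimate and nothing more. Using Hölder's inequality to trade the excess integrability for a power of $|\{g>k\}\cap Q_n|$ yields, for the levels $k_n:=M(1-2^{-n})$ and $U_n:=\int_{Q_n}(g-k_n)_+^2\dd z$, an iterative inequality
\[
U_{n+1}\ \le\ C'(1+\Lambda)^{c_0}\,b^{\,n}\,U_n^{1+\eta},
\]
with $b,c_0,\eta>0$ dimensional. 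The standard fast-geometric-convergence lemma gives $U_n\to0$, hence $\sup_{Q_{1/2}}g\le M$, provided $U_0\le\big(C'(1+\Lambda)^{c_0}\big)^{-1/\eta}b^{-1/\eta^2}$; choosing $M$ so that this holds forces $M\le C''(1+\Lambda)^{c_0/\eta}\|g\|_{L^2(Q_1)}$, and undoing the two reductions above yields \eqref{e:degiorgi} with $P=c_0/\eta$.

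The main obstacle is not conceptual but a matter of careful bookkeeping: one must revisit each step of \cite{golse2016} --- the energy estimate, the velocity-averaging lemma, and their assembly into the iteration --- and verify that none hides a constant growing faster than polynomially in $\Lambda$. Since $\Lambda$ only ever appears as the size of the bounded coefficients, it propagates multiplicatively through the energy estimate and the averaging lemma and is then raised to the power $1/\eta$ by the iteration. The single point demanding attention is that the constant in the fast-convergence lemma depends on both $b$ and $C'(1+\Lambda)^{c_0}$, so it must be tracked precisely to ensure that the smallness threshold for $U_0$ --- and hence the minimal admissible $M$ --- scales like a fixed power of $1+\Lambda$ times $\|g\|_{L^2(Q_1)}$. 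Carrying this out in detail is exactly the ``straightforward but tedious'' verification relegated to Appendix \ref{s:A}.
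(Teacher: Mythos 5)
Your proposal is correct and follows essentially the same route as the paper's Appendix~\ref{s:A}: rerun the De Giorgi scheme of \cite{golse2016} (Caccioppoli estimate, hypoelliptic/averaging gain of integrability, nonlinear iteration) while tracking $\Lambda$, which enters each step only through a fixed polynomial factor and is then raised to the power dictated by the fast-convergence lemma. The only cosmetic differences are that you remove the source term up front via the shift $g-(t+1)\|s\|_{L^\infty}$ whereas the paper keeps $s$ in the iteration and normalizes $g$ at the end, and that some of your intermediate powers of $\Lambda$ (e.g.\ the factor $1+\Lambda$ in the Caccioppoli step, which is really $\Lambda^2/\lambda$ from the Young's-inequality absorption) are understated --- harmless, since the theorem only asserts some polynomial power $P$.
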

\begin{proof}
	See \cite[Theorem 12]{golse2016}. The form of $C$ is justified in Appendix \ref{s:A} below.
\end{proof}

As in \cite{cameron2017landau}, we can apply scaling techniques to \eqref{e:degiorgi} and derive an improved pointwise estimate:
\begin{proposition}\label{p:improved}
	Let $g$ be a weak solution of \eqref{e:FP} in $Q_R$ for some $R>0$, with
	\begin{align*}
	0 < \lambda I \leq A(t,x,v) \leq \Lambda I, \qquad &(t,x,v)\in Q_R,\\
	|B(t,x,v)| \leq \Lambda/R,\qquad &(t,x,v)\in Q_R,\\
	s\in L^\infty(Q_R). \qquad &
	\end{align*}
	%and assume that $R^{-d}\|g\|_{L_{t,x}^\infty L_v^1(Q_R)} \leq \|s\|_{L^\infty(Q_R)}$. 
	Then the estimate
	\begin{equation*}
	g(0,0,0) \leq C\left(\|g\|_{L_{t,x}^\infty L_v^1(Q_R)}^{2/(d+2)} \|s\|_{L^\infty(Q_R)}^{d/(d+2)} + R^{-d}\|g\|_{L_{t,x}^\infty L_v^1(Q_R)}\right)
	\end{equation*}
	holds, with $C=C'(1+\Lambda)^P$ as in Theorem \ref{t:local}.
\end{proposition}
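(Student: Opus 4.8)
The plan is to deduce Proposition~\ref{p:improved} from Theorem~\ref{t:local} by a scaling argument that transplants the problem on $Q_R$ onto $Q_1$ and then optimizes over a free parameter. First I would introduce the parabolic scaling adapted to the kinetic (Kolmogorov) structure: given $r>0$, set $g_r(t,x,v) := g(r^2 t, r^3 x, r v)$. A direct computation shows that if $g$ solves \eqref{e:FP} on $Q_R$ then $g_r$ solves an equation of the same form on $Q_{R/r}$ with coefficients $A_r(z) = A(r^2 t, r^3 x, rv)$, $B_r(z) = r\, B(r^2t, r^3x, rv)$, and source $s_r(z) = r^2\, s(r^2t,r^3x,rv)$. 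In particular the ellipticity bounds $\lambda I \le A_r \le \Lambda I$ are preserved, and the hypothesis $|B|\le \Lambda/R$ becomes $|B_r|\le r\Lambda/R \le \Lambda$ as soon as $r\le R$, so that $g_r$ is an admissible solution on $Q_1$ for the application of Theorem~\ref{t:local}.

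Next I would apply \eqref{e:degiorgi} to $g_r$ on $Q_1$, giving $g(0,0,0) = g_r(0,0,0) \le \|g_r\|_{L^\infty(Q_{1/2})} \le C\big(\|g_r\|_{L^2(Q_1)} + \|s_r\|_{L^\infty(Q_1)}\big)$ with $C = C'(1+\Lambda)^P$. The source term is immediate: $\|s_r\|_{L^\infty(Q_1)} = r^2\|s\|_{L^\infty(Q_r)} \le r^2 \|s\|_{L^\infty(Q_R)}$. For the $L^2$ term I would change variables back and split the integral using the mixed norm: writing $\dd z = r^{2d+1}\,\dd t\,\dd x\,\dd v$ after rescaling, one finds $\|g_r\|_{L^2(Q_1)}^2 = r^{-(2d+1)}\iint_{Q_r} |g|^2 \le r^{-(2d+1)} \|g\|_{L^\infty_{t,x}L^1_v(Q_r)}\, \|g\|_{L^\infty(Q_r)} \cdot (\text{measure factors})$, but since we do not want an $L^\infty$ norm of $g$ on the right-hand side, the cleaner route is to bound $\|g_r\|_{L^2(Q_1)} \lesssim \|g_r\|_{L^\infty_{t,x}L^1_v(Q_1)}^{1/2}\|g_r\|_{L^\infty_{t,x}L^2_v(Q_1)}^{1/2}$ and then absorb the $L^2_v$ factor by yet another (smaller) application of the $L^\infty$ estimate on a slightly shrunk cylinder — this is the standard interpolation trick of \cite{cameron2017landau}. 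Tracking the scaling weights, $\|g_r\|_{L^\infty_{t,x}L^1_v(Q_1)} = r^{-d}\|g\|_{L^\infty_{t,x}L^1_v(Q_r)} \le r^{-d}\|g\|_{L^\infty_{t,x}L^1_v(Q_R)}$.

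Combining these estimates yields a bound of the shape $g(0,0,0) \le C\big(r^{-d}\|g\|_{L^\infty_{t,x}L^1_v(Q_R)} + r^2\|s\|_{L^\infty(Q_R)}\big)$ valid for every $r\in(0,R]$. I would then optimize in $r$: the two terms balance when $r^{d+2} \approx \|g\|_{L^\infty_{t,x}L^1_v(Q_R)}/\|s\|_{L^\infty(Q_R)}$, i.e. $r \approx \big(\|g\|_{L^\infty_{t,x}L^1_v(Q_R)}/\|s\|_{L^\infty(Q_R)}\big)^{1/(d+2)}$. Plugging this choice in produces the interpolated term $\|g\|_{L^\infty_{t,x}L^1_v(Q_R)}^{2/(d+2)}\|s\|_{L^\infty(Q_R)}^{d/(d+2)}$. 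One must check that this optimal $r$ does not exceed $R$; if it does — which happens precisely when the $L^1$ mass is large relative to $R^{d+2}\|s\|_\infty$ — I would instead simply take $r=R$, which gives the second term $R^{-d}\|g\|_{L^\infty_{t,x}L^1_v(Q_R)}$ (and the source term $R^2\|s\|_\infty$ is then dominated by it, up to the constant, by the case hypothesis). Taking the two cases together gives exactly the stated estimate, with the constant $C = C'(1+\Lambda)^P$ carried through unchanged since scaling does not touch $\Lambda$.

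The main obstacle I anticipate is the bookkeeping in the $L^2(Q_1)$-to-$L^\infty_{t,x}L^1_v$ reduction: one needs the interpolation $\|g\|_{L^2_v} \le \|g\|_{L^1_v}^{1/2}\|g\|_{L^\infty_v}^{1/2}$ together with a second, self-improving application of the local estimate on a nested cylinder $Q_{1/2}\supset Q_{1/4}$ to replace $\|g\|_{L^\infty}$ by the $L^1_v$ quantity plus the source, and then folding the resulting $\|s\|_\infty$ contributions into the same two terms without generating spurious powers of $r$. This is routine but requires care to ensure the final constant still has the clean form $C'(1+\Lambda)^P$ and that no hidden dependence on $R$ sneaks in through the nested-cylinder step. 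Everything else — the scaling identities, the preservation of ellipticity, the smallness of $B_r$ — is a direct computation.
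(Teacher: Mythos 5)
Your argument is correct and is precisely the proof the paper points to in \cite{cameron2017landau} (Proposition 3.2 and Lemma 3.3 there): kinetic rescaling $g_r(t,x,v)=g(r^2t,r^3x,rv)$ onto $Q_1$, the upgrade from $\|g\|_{L^2}$ to $\|g\|_{L^\infty_{t,x}L^1_v}$ via the interpolation $\|g\|_{L^2_v}\le\|g\|_{L^1_v}^{1/2}\|g\|_{L^\infty_v}^{1/2}$ plus absorption on nested cylinders, and optimization in $r\in(0,R]$ with the cutoff case $r=R$ handled exactly as you describe. The only cosmetic caveat is that the Young-inequality absorption step squares the constant from Theorem \ref{t:local}, so the exponent in $C'(1+\Lambda)^P$ may come out as $2P$ rather than $P$; since $P$ is nowhere claimed to be optimal, this is harmless.
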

\begin{proof}
	See \cite{cameron2017landau}, Proposition 3.2 and Lemma 3.3.
\end{proof}

To apply Proposition \ref{p:improved} to the Landau equation, since the ellipticity constants of $\bar a^f_{ij}$ degenerate as $|v|\to \infty$, we need a change of variables in $v$ that produces an equation with universal ellipticity constants in a small cylinder. This change of variables was first introduced in \cite{cameron2017landau} for the case $\gamma \in (-2,0)$.

\begin{lemma}\label{l:T}
	Let $z_0 =(t_0,x_0,v_0)\in \R_+\times \R^{d}\times \R^d$ be such that $|v_0|\geq 2$, let $S$ be the linear transformation such that
	\begin{equation*}
	S e = \begin{cases}   |v_0|^{1+\gamma/2} e , & e \cdot v_0 = 0\\
	|v_0|^{\gamma/2}e, & e \cdot v_0 = |v_0|,\end{cases}
	\end{equation*}
	and define
	\begin{align*}
	\mathcal S_{z_0}(t,x,v) := (t_0+t,x_0+S x + t v_0 ,v_0 + Sv).
	\end{align*}
	Then there exists a radius 
	\[r_1 = c_1 |v_0|^{-1-\gamma/2}\min\left(1,\sqrt{t_0/2}\right),\] 
	with $c_1$ universal, such that:
	\begin{enumerate}
		\item[\textup{(a)}] There exists a constant $C>0$ independent of $v_0\in\R^d\setminus B_2$ such that for all $v\in B_{r_1}$,
		\[ C^{-1} |v_0| \leq |v_0 + Sv| \leq C |v_0|.\]
		\item[\textup{(b)}] If $f$ is a weak solution of the Landau equation \eqref{e:main} satisfying \eqref{e:hydro} and \eqref{e:G0}, then $f_{z_0}(t,x,v) := f(\mathcal S_{z_0}(t,x,v))$ satisfies
		\begin{equation}\label{e:isotropic}
		\partial_t f_{z_0} + v \cdot \nabla_x f_{z_0} = \nabla_v \cdot\left(A(z)\nabla_v f_{z_0}\right) +   B(z)\cdot \nabla_v f_{z_0} +   C(z) f_{z_0}
		\end{equation}
		in $Q_1$, and the coefficients 
		\[\begin{split}
		&A(z) = S^{-1}\bar a^f(\mathcal S_{z_0}(z)) S^{-1},
		\quad B(z) = S^{-1}\bar b^f(\mathcal S_{z_0}),\quad C(z) = \bar c^f(\mathcal S_{z_0}(z))
		\end{split}\]
		satisfy
		\begin{align*}
		\lambda I &\leq  A(z) \leq \Lambda I,\\
		| B(z)| &\lesssim |v_0|^{1+\gamma/2},\\
		| C(v)| &\lesssim |v_0|^{\gamma},
		\end{align*}
		with $\Lambda \lesssim G_0^P$, and $\lambda$ and the bounds on $B(z)$ and $C(z)$ depending only on universal constants. 
	\end{enumerate}
\end{lemma}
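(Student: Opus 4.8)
The plan is to verify the three properties of $\mathcal{S}_{z_0}$ by a direct computation, treating the change of variables as the composition of a parabolic-type scaling adapted to the twisted cylinder geometry with the linear map $S$ in the $v$-variable. First I would record the basic algebraic facts about $S$: it is symmetric positive definite, with eigenvalue $|v_0|^{1+\gamma/2}$ on the $(d-1)$-dimensional subspace $v_0^\perp$ and eigenvalue $|v_0|^{\gamma/2}$ on $\mathrm{span}(v_0)$; hence $\|S\| = |v_0|^{1+\gamma/2}$ (using $\gamma \le 2$) and $\|S^{-1}\| = |v_0|^{-\gamma/2}$. For part (a), the bound $|v_0 + Sv|$ for $|v| < r_1$ follows since $|Sv| \le \|S\| |v| \le |v_0|^{1+\gamma/2} r_1 \approx |v_0| \min(1,\sqrt{t_0/2}) \le |v_0|$ up to the universal constant $c_1$, which we take small enough; the lower bound is the reverse triangle inequality. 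This is routine.

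For part (b), the first task is to check that $f_{z_0}$ solves an equation of the form \eqref{e:isotropic} with the stated coefficients. This is the chain rule: $\mathcal{S}_{z_0}$ is an affine map, and the specific shear $x \mapsto x_0 + Sx + tv_0$ is designed so that the transport operator $\partial_t + v\cdot\nabla_x$ is preserved — one checks $(\partial_t + v\cdot\nabla_x)(f\circ\mathcal S_{z_0}) = ((\partial_t + v\cdot\nabla_x)f)\circ\mathcal S_{z_0}$ because the $tv_0$ term in the $x$-component produces exactly the cross term needed when $\nabla_x$ hits it, matched against the $v_0$ shift in the $v$-component, and the $Sx$ against $Sv$. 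Then $\nabla_v(f\circ\mathcal S_{z_0}) = S(\nabla_v f)\circ\mathcal S_{z_0}$, and the divergence-form term transforms with the two factors of $S^{-1}$ sandwiching $\bar a^f$, giving $A = S^{-1}\bar a^f(\mathcal S_{z_0})S^{-1}$; similarly $B = S^{-1}\bar b^f(\mathcal S_{z_0})$ and $C = \bar c^f(\mathcal S_{z_0})$ unchanged. One should note the domain: the preimage of the region where the equation holds under $\mathcal S_{z_0}$ contains $Q_1$ precisely because of the choice of $r_1$ (the $r^3$ in the definition of $Q_r$ matches the scaling of the twisted cylinder), which is the role of the $\min(1,\sqrt{t_0/2})$ factor — it keeps us inside $[0,T]$ in the time variable.

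The substantive estimates are the ellipticity and zeroth/first-order bounds. For the upper bound on $A$: for a unit vector $e$, $e^T A e = (S^{-1}e)^T \bar a^f(\mathcal S_{z_0})(S^{-1}e)$, and decomposing $S^{-1}e$ into its components along and perpendicular to $v_0$ (equivalently, to $\mathcal S_{z_0}$'s velocity $v_0 + Sv \approx$ a vector comparable to $v_0$ by part (a)), I would apply the upper bounds of Lemma \ref{l:abc} at the point $\mathcal S_{z_0}(z)$, where $1 + |v_0+Sv| \approx |v_0|$. The component of $S^{-1}e$ along $v_0$ has size $\le |v_0|^{-\gamma/2}$ and multiplies a term $\lesssim G_0 + |v_0|^{\gamma}$; the perpendicular component has size $\le |v_0|^{-1-\gamma/2}$ and multiplies a term $\lesssim G_0 + |v_0|^{\gamma+2}$; in both cases the products are $\lesssim G_0 |v_0|^{-\gamma} + 1 \lesssim G_0$ (using $|v_0|\ge 2$ and $\gamma > 0$), giving $\Lambda \lesssim G_0$ — actually $\Lambda \lesssim G_0^P$ is stated, which is more than enough slack; here one must be a little careful that the cross terms in the quadratic form, not just the diagonal ones, are controlled, which follows from Cauchy–Schwarz once the diagonal directions are. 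For the lower bound $A \ge \lambda I$: same decomposition, but now use the \emph{lower} bounds of Lemma \ref{l:abc}, which do not involve $G_0$; the component along $v_0$ of size $\approx |v_0|^{-\gamma/2}$ meets the $(1+|v|)^\gamma$ lower bound to contribute $\approx 1$, and the perpendicular component of size $\approx |v_0|^{-1-\gamma/2}$ meets the $(1+|v|)^{\gamma+2}$ lower bound to contribute $\approx 1$, so $\lambda$ is universal. The main obstacle, and the step requiring the most care, is precisely this anisotropic matching: one has to confirm that $S$ is calibrated so that the \emph{degenerate} direction of $\bar a^f$ (the $v_0$ direction, where $\bar a^f \sim |v|^\gamma$) is stretched by exactly $S^{-1}$'s largest factor $|v_0|^{-\gamma/2}$ and the \emph{large} direction (perpendicular, $\sim |v|^{\gamma+2}$) by $|v_0|^{-1-\gamma/2}$ — i.e., that $A$'s eigenvalues all collapse to order $1$ — and also that moving the base point from $v_0$ to $v_0 + Sv$ only perturbs things by a universal factor, for which part (a) is exactly what is needed. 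Finally the bounds on $B$ and $C$ are immediate: $|B| \le \|S^{-1}\||\bar b^f(\mathcal S_{z_0})| \lesssim |v_0|^{-\gamma/2}(1+|v_0|)^{\gamma+1} \lesssim |v_0|^{1+\gamma/2}$, and $|C| = |\bar c^f(\mathcal S_{z_0})| \lesssim (1+|v_0|)^\gamma \lesssim |v_0|^\gamma$, both using part (a) and Lemma \ref{l:abc}, with universal constants.
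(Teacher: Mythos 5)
Your proposal is correct and follows essentially the same route as the paper, which simply defers the ellipticity computation to \cite[Lemma 4.1]{cameron2017landau}; your eigenvalue-matching decomposition of $S^{-1}e$ into components along and orthogonal to $v_0$, with part (a) used to control the change of base direction from $v_0$ to $v_0+Sv$, is exactly that argument, and the chain-rule and $B$, $C$ bounds match the paper's. One small arithmetic slip in part (a): $|v_0|^{1+\gamma/2} r_1 = c_1\min\bigl(1,\sqrt{t_0/2}\bigr) \le c_1$, not $\approx |v_0|\min\bigl(1,\sqrt{t_0/2}\bigr)$, but this only strengthens your conclusion, since then $|Sv|\le c_1 \le |v_0|/2$ directly.
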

\begin{proof}
	To prove (a), since $v\in B_{r_1}$, we have
	\[ |Sv| \leq c_1(1+|v_0|)^{1+\gamma/2}|v| \leq c_1,\]
	so that $|v_0+Sv| \approx |v_0|$ for $|v_0| \geq 2$.
	
	For (b), the equation \eqref{e:isotropic} satisfied by $f_{z_0}$ follows by direct computation. The uniform ellipticity of $A(z)$ (with constants independent of $|v_0|$) is the only subtle part of this lemma, and is the reason we must take $|v|\lesssim |v_0|^{-1-\gamma/2}$. The proof can be found in \cite[Lemma 4.1]{cameron2017landau}. For the bound on $B(z)$, Lemma \ref{l:abc} and conclusion (a) imply
	\begin{align*} 
	|B(z)| &\lesssim \|S^{-1}\| |\bar b^f(\mathcal S_{z_0}(z))| \lesssim (1+|v_0|)^{1+\gamma/2}.
	\end{align*}
	The bound on $C(z)$ follows similarly.
\end{proof}

Next, we find global upper bounds for any solution, that depend only on universal constants and $G_0$. The proof also gives some polynomial decay, but we will not make any use of this.
\begin{theorem}\label{t:upper}
	Let $f:[0,T]\times \R^{2d}\to\R_+$ be a bounded weak solution of \eqref{e:main} satisfying \eqref{e:hydro} and \eqref{e:G0}. Then 
	\[f(t,x,v) \leq C \left(1+t^{-d/2}\right)(1+|v|)^{-1-\gamma},\]
	with $C \lesssim G_0^P$ for some $P$ depending on dimension. 
\end{theorem}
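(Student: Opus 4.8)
\emph{Proof sketch (proposal).} The plan is a one‑step bootstrap on the weighted sup norm
\[
	N(t):=\sup_{0<s\le t,\ (x,v)\in\R^{2d}}(1+|v|)^{1+\gamma}f(s,x,v),\qquad N_0(t):=\sup_{0<s\le t,\ (x,v)\in\R^{2d}}f(s,x,v),
\]
the latter being finite because $f$ is bounded. I will show $(1+|v_0|)^{1+\gamma}f(z_0)\lesssim G_0^{P}\big(N_0(t_0)^{d/(d+2)}+1+t_0^{-d/2}\big)$ for every $z_0=(t_0,x_0,v_0)$, uniformly in $v_0$, and then take the supremum and absorb the sublinear term.

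For $|v_0|\le 2$ I would apply Proposition \ref{p:improved} to $f$ in $Q_R(z_0)$ with $R=c\min(1,\sqrt{t_0})$, $c$ small universal. On $Q_R(z_0)$ one has $|v|\le 3$, so Lemma \ref{l:abc} gives $\lambda I\le\bar a^f\le\Lambda I$ with $\lambda$ universal and $\Lambda\lesssim G_0$, and $|\bar b^f|\lesssim 1\le\Lambda/R$; taking $s=\bar c^f f$ in \eqref{e:FP}, one has $\|s\|_{L^\infty(Q_R(z_0))}\lesssim\sup_{Q_R(z_0)}f\le N_0(t_0)$ and $\|f\|_{L^\infty_{t,x}L^1_v(Q_R(z_0))}\le M_0$. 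Proposition \ref{p:improved} then yields $f(z_0)\lesssim G_0^{P}\big(N_0(t_0)^{d/(d+2)}+1+t_0^{-d/2}\big)$, and $(1+|v_0|)^{1+\gamma}\le 3^{1+\gamma}$ here.

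For $|v_0|>2$ I would use Lemma \ref{l:T}, passing to $f_{z_0}=f\circ\mathcal S_{z_0}$, which solves \eqref{e:isotropic} in $Q_1$ with $\lambda$ universal, $\Lambda\lesssim G_0^{P}$ and $|C|\lesssim|v_0|^{\gamma}$, and apply Proposition \ref{p:improved} in $Q_{r_1}$ with $r_1=c_1|v_0|^{-1-\gamma/2}\min(1,\sqrt{t_0/2})$; for $c_1$ small, $\|B\|_{L^\infty(Q_1)}\lesssim|v_0|^{1+\gamma/2}\le\Lambda/r_1$ (using that $\Lambda$ exceeds a universal constant). The decisive estimate is on the $L^1_v$ norm: since $\mathcal S_{z_0}(Q_{r_1})$ only involves velocities with $|w|=|v_0+Sv|\simeq|v_0|$, and since $w=v_0+Sv$ has Jacobian $\det S=|v_0|^{(d-1)+d\gamma/2}$, the \emph{moment} bound \eqref{e:G0} — not merely the mass bound — gives
\[
	\|f_{z_0}\|_{L^\infty_{t,x}L^1_v(Q_{r_1})}\ \le\ \frac{1}{\det S}\sup_{t,x}\int_{\{|w|\gtrsim|v_0|\}}f(t,x,w)\dd w\ \lesssim\ \frac{G_0\,|v_0|^{-(\gamma+2)}}{\det S}\ =\ G_0\,|v_0|^{-\left(d+1+\gamma+\frac{d\gamma}{2}\right)},
\]
while the source $Cf_{z_0}$ in \eqref{e:isotropic} satisfies $\|Cf_{z_0}\|_{L^\infty(Q_{r_1})}\lesssim|v_0|^{\gamma}\sup_{Q_{r_1}}f_{z_0}\le|v_0|^{\gamma}N_0(t_0)$. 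Substituting these into Proposition \ref{p:improved} and using $(\det S)^{-1}r_1^{-d}=c_1^{-d}|v_0|\min(1,\sqrt{t_0/2})^{-d}$, the $r_1^{-d}\|f_{z_0}\|$ term comes out exactly of size $G_0\,|v_0|^{-(1+\gamma)}\min(1,\sqrt{t_0/2})^{-d}$ — the powers of $|v_0|$ cancel to leave precisely the weight $|v_0|^{-(1+\gamma)}$ — and the interpolation term is $\lesssim G_0^{2/(d+2)}N_0(t_0)^{d/(d+2)}|v_0|^{-(1+\gamma)}$ (here $\gamma\le 1$ is used). Multiplying by $(1+|v_0|)^{1+\gamma}$ and by the De Giorgi constant $\lesssim G_0^{P}$ of Theorem \ref{t:local} gives the claimed bound.

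Taking the supremum over $v_0$ yields $N(t_0)\lesssim G_0^{P}\big(N_0(t_0)^{d/(d+2)}+1+t_0^{-d/2}\big)$; in particular $N(t_0)<\infty$, so $N_0(t_0)\le N(t_0)<\infty$, and Young's inequality (since $\tfrac{d}{d+2}<1$) absorbs the sublinear term and gives $N(t)\lesssim G_0^{P}(1+t^{-d/2})$ for some $P$ depending only on $d$, which is the assertion (together with the stated polynomial decay). I expect the main obstacle to be exactly the $L^1_v$ estimate displayed above: using only the mass bound one gets decay $(1+|v|)^{-1}$ in the $r_1^{-d}\|f_{z_0}\|$ term, which is too weak when $\gamma>0$, so the $(\gamma+2)$-moment control \eqref{e:G0} is genuinely required — and this is precisely why the constant here still depends on $G_0$. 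The remaining work is careful bookkeeping of the exponents in $v_0$ and of the $\Lambda\lesssim G_0^{P}$ dependence of the De Giorgi constant.
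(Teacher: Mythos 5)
Your overall strategy is the same as the paper's: a two-case pointwise estimate (direct application of Proposition \ref{p:improved} for $|v_0|\le 2$; the change of variables of Lemma \ref{l:T} for $|v_0|>2$, with the moment bound \eqref{e:G0} supplying the extra $|v_0|^{-(2+\gamma)}$ decay in the $L^1_v$ norm so that the powers of $|v_0|$ cancel to leave $|v_0|^{-1-\gamma}$), followed by absorption of the sublinear sup-norm term. Your exponent bookkeeping for $\det S$, for $r_1^{-d}\|f_{z_0}\|_{L^\infty_{t,x}L^1_v}$, and for the interpolation term (including the observation that $\gamma\le 1$ is what makes the interpolation exponent at least $1+\gamma$) all match \eqref{e:L1bound} and \eqref{e:estimate-small-gamma}, and your diagnosis that \eqref{e:G0} is genuinely needed here is exactly right.

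There is, however, a genuine gap in the closing step, caused by your sup norms not being weighted in time. Your pointwise estimate is really a bound on the \emph{time-slice} supremum, $\sup_{x,v}(1+|v|)^{1+\gamma}f(t_0,x,v)\lesssim G_0^P\bigl(N_0(t_0)^{d/(d+2)}+1+t_0^{-d/2}\bigr)$. To pass from this to the claimed inequality for $N(t_0)=\sup_{0<s\le t_0}\sup_{x,v}(1+|v|)^{1+\gamma}f$ you must also take the supremum over $s\in(0,t_0]$, and $\sup_{0<s\le t_0}s^{-d/2}=+\infty$, so the inequality ``$N(t_0)\lesssim G_0^P(N_0(t_0)^{d/(d+2)}+1+t_0^{-d/2})$'' does not follow. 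Relatedly, the quantity you try to absorb, $N_0(t_0)=\sup_{0<s\le t_0}\|f(s)\|_{L^\infty}$, is pinned near $\|f\|_{L^\infty([0,T]\times\R^{2d})}$ by the behavior at times $s\approx 0$ (no smallness is available there), so Young's inequality only returns a bound depending on $\|f\|_{L^\infty}$ rather than on universal constants and $G_0$. The fix is the one the paper uses: work with the time-weighted quantity $K=\sup\min\{t^{d/2},1\}\,f$ (one may also include the $(1+|v|)^{1+\gamma}$ weight), bound the source by $\|\bar c^f f\|_{L^\infty(Q_{r_0}(z_0))}\lesssim K r_0^{-d}|v_0|^\gamma$, and evaluate the resulting inequality at a near-maximizer of the weighted supremum; the factors $\max\{t_0^{-d/2},1\}\approx r_0^{-d}$ then cancel on both sides (note $r_0^{-d^2/(d+2)}\le r_0^{-d}$ for the interpolation term), yielding $K\lesssim G_0^{(P+1)(d+2)/2}$, after which one re-applies the pointwise estimate to recover the $(1+|v|)^{-1-\gamma}$ decay. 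With that modification your argument coincides with the paper's proof.
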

The $P$ in this theorem is not the same as the $P$ in Theorem \ref{t:local}.
\begin{proof}
 	Define
 	\[K:= \sup\limits_{(0,T]\times \R^{2d}} \min\{t^{d/2},1\}f(t,x,v). \]
	We may assume $K \geq 1$. 	Let $\eps>0$, and choose $z_0 = (t_0,x_0,v_0)\in (0,T]\times \R^{2d}$ such that $f(t_0,x_0,v_0)> (K-\eps)\max\{t_0^{-d/2}, 1\}$. Define $r_0 = \min\{1,\sqrt{t_0}\}/2$, and note that $r_0^{-d} \approx (1+t_0^{-d/2})$.
	
	If $|v_0|\leq 2$, Lemma \ref{l:abc} and Proposition \ref{p:improved} with $g(z) = f(z_0+z)$ and $s(z) = \bar c^f(z_0+z) f(z_0+z)$ in $Q_{r_0}(z_0)$ imply 
	\begin{equation}\label{e:small-v}
	f(t_0,x_0,v_0) \leq C G_0^P \left(M_0^{2/(d+2)} K^{d/(d+2)} + r_0^{-d}M_0\right) \lesssim G_0^P r_0^d K^{d/(d+2)}.
	\end{equation}
	
	If $|v_0|\geq 2$, let $r_1 = c_1 r_0 |v_0|^{-1-\gamma/2}$ and $f_{z_0}$ be as in Lemma \ref{l:T}. %, let %We will estimate $f_{z_0}(t,x,v)$ in $Q_R$, where
%	\[R := c_1(r_0/2)(1+|v_0|)^{-1-\gamma/2},\]
%	with $c_1$ as in Lemma \ref{l:T}. 
	By Lemma \ref{l:T}(b), $f_{z_0}$ solves \eqref{e:isotropic} in $Q_{r_1}$, and 
	\begin{align}
	0<&\lambda I \leq  A(z) \leq \Lambda I,\nonumber\\
	|B(z)|&\lesssim |v_0|^{1+\gamma/2},\label{e:Bbound}\\
	|C(z)| &\lesssim  |v_0|^{\gamma},\nonumber
	\end{align}
	in $Q_{r_1}$, where $\Lambda \lesssim G_0^P$ and all other constants are universal.	By Lemma \ref{l:T}(a) and the definition of $K$,
	 \begin{equation}\label{e:QRbound}
	 \|C f_{z_0}\|_{L^\infty(Q_{r_1})} \lesssim K r_0^{-d} |v_0|^\gamma.
	 \end{equation}
	Let $Q_{S,r_1}$ be the image of $Q_{r_1}$ under $z \mapsto \mathcal S_{z_0}(z)$, and note that
	\begin{align}
	\| f_{z_0} \|_{L_{t,x}^\infty L_v^1(Q_{r_1})} &= \det(S^{-1})\|f\|_{L_{t,x}^\infty L_v^1(Q_{S,r_1})}\nonumber\\
	& = |v_0|^{-[(d-1)(2+\gamma)/2 + \gamma/2]}\|f\|_{L_{t,x}^\infty L_v^1(Q_{S,r_1})}\nonumber\\
	&\leq |v_0|^{-\left(1+\gamma+d(2+\gamma)/2\right)}G_0,\label{e:L1bound}
	\end{align}
	where the last inequality comes from \eqref{e:G0} and Lemma \ref{l:T}(a). %the fact that $|v|\gtrsim |v_0|$ if $(t,x,v)\in Q_{T,R}$. 	
	By \eqref{e:Bbound}, we can apply Proposition \ref{p:improved} in $Q_{r_1}$ with $g = f_{z_0}$ and $s = C(z) f_{z_0}$ to obtain
	\begin{align}\label{e:estimate-small-gamma}
	f(t_0,x_0,v_0) &\leq C\left(\|f_{z_0}\|_{L_{t,x}^\infty L_v^1(Q_{r_1})}^{2/(d+2)} \| C(z) f_{z_0}\|_{L^\infty(Q_{r_1})}^{d/(d+2)}+ r_0^{-d}|v_0|^{d(2+\gamma)/2}\|f_{z_0}\|_{L_{t,x}^\infty L_v^1(Q_{r_1})}\right)\nonumber\\
	&\leq C'G_0^P \left(G_0^{2/(d+2)}(Kr_0^{-d})^{d/(d+2)} |v_0|^{-1-(d+2\gamma)/(d+2)} + r_0^{-d}G_0 |v_0|^{-1-\gamma}\right),\nonumber\\
	&\leq C' G_0^{P+1} K^{d/(d+2)} r_0^{-d} |v_0|^{-1-\gamma},
	\end{align}
	using \eqref{e:QRbound} and \eqref{e:L1bound}.
	
	By our choice of $(t_0,x_0,v_0)$, \eqref{e:small-v} and \eqref{e:estimate-small-gamma} imply
	\[(K - \eps)r_0^{-d} \leq C' G_0^{P+1} K^{d/(d+2)}r_0^{-d},\]
	and since this is true for any $\varepsilon >0$, we have $K \leq C G_0^{(P+1)(d+2)/2}$. Applying \eqref{e:estimate-small-gamma} again, we conclude $f(t_0,x_0,v_0) \leq C G_0^{(P+1)d/2} r_0^{-d} (1+|v_0|)^{-1-\gamma}$.
\end{proof}

\section{Gaussian decay}

In this section, we show that all bounded solutions have Gaussian decay. The proof relies on the maximum principle for $H^1$ weak solutions of the linear Landau equation, which can be found in, e.g., the appendix of \cite{cameron2017landau}. Given a function $f$ satisfying \eqref{e:hydro} and \eqref{e:G0}, a solution $\phi$ to the linear Landau equation satisfies
\[\partial_t \phi + v\cdot \nabla_x \phi = \nabla_v\cdot(\bar a^f \nabla_v\phi) + \bar b^f \cdot \nabla_v \phi + \bar c^f \phi.\]
Note that if $\phi$ is smooth, this is equivalent to
\[ \partial_t \phi + v\cdot \nabla_x \phi = \tr(\bar a^f D_v^2 \phi) + \bar c^f \phi,\]
since $\bar b^f_i = -\partial_{v_j} \bar a^f_{ij}$. %We are allowed to differentiate $\bar a^f$ in $v$, since the convolution kernel is differentiable.

First, we show Gaussians with appropriate decay constants are super- or sub-solutions of the linear Landau equation for large velocities:
\begin{lemma}\label{l:super}
Let $f$ be a bounded function satisfying \eqref{e:hydro} and \eqref{e:G0}, and let $\bar a^f$ and $\bar c^f$ be defined by \eqref{e:a} and \eqref{e:c} respectively.
	\begin{enumerate}
	\item[(a)]
	 There exists a universal constant $C$ such that, if $\alpha = \dfrac C {G_0}$, then the function 
	\[\phi(v) := e^{-\alpha |v|^2}\]
	satisfies
	\begin{equation*}%\label{e:super}
	\bar a^f_{ij} \partial_{ij} \phi + \bar c^f \phi \leq -C G_0^{-1}|v|^{\gamma+2}\phi,
	\end{equation*}
	for $|v|\geq CG_0^{1/2}$.
	\item[(b)] There exists $C$ universal such that, if $\alpha =  C G_0$, then $\phi(v)$ defined as in (a) satisfies 
	\[\bar a^f_{ij}\partial_{ij} \phi + \bar c^f \phi \geq C G_0^2 |v|^{\gamma+2} \phi,\]
	for $|v|\geq CG_0^{-1/2}$.
	\end{enumerate}
\end{lemma}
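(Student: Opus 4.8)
The plan is to estimate the quantity $\bar a^f_{ij}\partial_{ij}\phi + \bar c^f\phi$ directly by computing $\partial_{ij}\phi$ and using the pointwise bounds on $\bar a^f$ and $\bar c^f$ from Lemma~\ref{l:abc}. First I would compute, for $\phi(v) = e^{-\alpha|v|^2}$,
\[
\partial_{ij}\phi = \left(4\alpha^2 v_i v_j - 2\alpha\delta_{ij}\right)\phi,
\]
so that
\[
\bar a^f_{ij}\partial_{ij}\phi = \left(4\alpha^2 \bar a^f_{ij}v_iv_j - 2\alpha\,\tr \bar a^f\right)\phi.
\]
The key structural point is that the anisotropy of $\bar a^f$ works in our favor: along the direction $e = v/|v|$, the relevant upper bound in Lemma~\ref{l:abc} is $\bar a^f_{ij}v_iv_j/|v|^2 \le G_0 + C_1(1+|v|)^\gamma$, i.e. the \emph{degenerate} (slow-growing) eigenvalue direction, so $4\alpha^2\bar a^f_{ij}v_iv_j \lesssim \alpha^2|v|^2(G_0 + |v|^\gamma)$. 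Meanwhile the ``good'' negative term $-2\alpha\tr\bar a^f$ is controlled \emph{below} in absolute value: since $\tr\bar a^f = \sum$ of eigenvalues and $d-1$ of those eigenvalues are $\gtrsim (1+|v|)^{\gamma+2}$ by the lower bound in Lemma~\ref{l:abc}, we get $-2\alpha\tr\bar a^f \le -c\alpha(1+|v|)^{\gamma+2}$. Finally $\bar c^f\phi \le C_3(1+|v|)^\gamma\phi$, which is lower order.

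Combining these, for (a) I want to show
\[
\bar a^f_{ij}\partial_{ij}\phi + \bar c^f\phi \le \left(C\alpha^2 G_0|v|^2 + C\alpha^2|v|^{\gamma+2} - c\alpha|v|^{\gamma+2} + C|v|^\gamma\right)\phi.
\]
For $|v|$ large the $|v|^{\gamma+2}$ terms dominate the $|v|^\gamma$ term and (since $\gamma \le 2$) also dominate $|v|^2$ \emph{once we know} $|v|^\gamma$ beats the constant; more carefully, the $C\alpha^2 G_0 |v|^2$ term is the one that must be absorbed, and this is exactly where the threshold $|v| \ge CG_0^{1/2}$ and the choice $\alpha = C/G_0$ enter: with $\alpha \sim 1/G_0$, $C\alpha^2 G_0|v|^2 = C|v|^2/G_0$, while the good term is $c\alpha|v|^{\gamma+2} = c|v|^{\gamma+2}/G_0$; the good term wins provided $|v|^\gamma \ge C'$, i.e. $|v| \gtrsim 1$ (crudely, since $G_0 \ge 1$). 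To also dominate the stray $C\alpha^2|v|^{\gamma+2}\phi = C|v|^{\gamma+2}/G_0^2\,\phi$, note $\alpha^2 \ll \alpha$, so this is fine. One then picks the universal constants in $\alpha$ and in the threshold so that after absorbing everything into $-\tfrac12 c\alpha|v|^{\gamma+2}\phi$ one is left with $\le -CG_0^{-1}|v|^{\gamma+2}\phi$. Part (b) is the mirror image: one now wants the \emph{positive} term $4\alpha^2\bar a^f_{ij}v_iv_j\phi$, using the \emph{lower} ellipticity bound $\bar a^f_{ij}v_iv_j \ge c_1|v|^{2}(1+|v|)^\gamma$ in the radial direction (wait --- radial direction is the degenerate one; here one instead uses that \emph{all} eigenvalues are $\ge c_1(1+|v|)^\gamma$, so $\bar a^f_{ij}v_iv_j \ge c_1|v|^2(1+|v|)^\gamma$), to beat the now-harmful term $-2\alpha\tr\bar a^f \le -2\alpha\cdot(\text{something like } \Lambda(1+|v|)^{\gamma+2})$, where the upper bound on $\tr\bar a^f$ brings in $G_0$. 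With $\alpha = CG_0$ and $|v| \ge CG_0^{-1/2}$, the term $4\alpha^2\bar a^f_{ij}v_iv_j \gtrsim G_0^2|v|^2(1+|v|)^\gamma \gtrsim G_0^2|v|^{\gamma+2}$ dominates $2\alpha\tr\bar a^f \lesssim G_0(G_0 + |v|^\gamma)|v|^2 \cdot(\dots)$; checking the balance of powers of $G_0$ and $|v|$ against the threshold is the computation to do carefully.

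The main obstacle, and the only genuinely delicate point, is bookkeeping the two competing effects simultaneously: the $G_0$-dependence (which forces $\alpha \sim G_0^{\mp 1}$ and the velocity thresholds $|v| \gtrsim G_0^{\pm 1/2}$) and the $|v|$-growth (which requires $\gamma + 2$ powers to come out on top against both the $|v|^2$ from $\bar a^f_{ij}v_iv_j$ and the lower-order $|v|^\gamma$ from $\bar c^f$). In particular one must be careful that in (a) the ``bad'' term $\alpha^2 \bar a^f_{ij}v_iv_j$ uses the \emph{radial} (slowly-growing, $G_0 + |v|^\gamma$) bound rather than the $(1+|v|)^{\gamma+2}$ bound --- using the wrong eigenvalue here would break the estimate. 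I would organize the proof by fixing $e = v/|v|$ throughout, writing $\bar a^f_{ij}v_iv_j = |v|^2 \bar a^f_{ij}e_ie_j$ and $\tr \bar a^f \ge (d-1)\cdot c_1(1+|v|)^{\gamma+2}$ (dropping the radial eigenvalue, which only helps in (a) and is harmless to drop), then collecting all terms as a single polynomial in $|v|$ times $\phi$ and choosing the constants at the end.
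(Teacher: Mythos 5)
Your proposal is correct and follows essentially the same route as the paper: compute the Hessian of the Gaussian, pair the positive radial part $4\alpha^2 v\otimes v$ with the degenerate ($G_0+C(1+|v|)^{\gamma}$) upper bound and the negative part with the tangential lower bound $c_1(1+|v|)^{\gamma+2}$, then choose $\alpha\sim G_0^{\mp1}$; your trace decomposition is just a repackaging of the paper's radial/tangential projector decomposition. One small correction to your bookkeeping in (a): the threshold $|v|\ge CG_0^{1/2}$ is forced by absorbing the zeroth-order term $\bar c^f\phi\lesssim |v|^{\gamma}\phi$ into $-cG_0^{-1}|v|^{\gamma+2}\phi$ (which requires $|v|^2\gtrsim G_0$), not by the $\alpha^2G_0|v|^2$ term, which, as your own computation shows, only needs $|v|\gtrsim 1$.
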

\begin{proof}
	Let $\alpha$ be a positive constant to be chosen later. Since $\phi$ is radial, for any $v\neq 0$ we have
	\begin{equation*}%\label{e:D2phi}
	\partial_{ij}\phi = \frac {\partial_{rr}\phi}{|v|^2}   v_i v_j + \frac{\partial_r\phi}{|v|} \left( \delta_{ij} - \frac{v_iv_j}{|v|^2}\right) = \left[\frac {4\alpha^2|v|^2 - 2\alpha}{|v|^2}   v_i v_j -  2\alpha  \left( \delta_{ij} - \frac{v_iv_j}{|v|^2}\right)\right] e^{-\alpha |v|^2},
	\end{equation*}
	and Lemma \ref{l:abc} implies
	\begin{align*}
	\bar a^f_{ij}\partial_{ij}\phi &\leq \left[(4\alpha^2|v|^2 - 2\alpha)C_1G_0 |v|^{\gamma}  -  2\alpha c_1 |v|^{\gamma+2}\right] e^{-\alpha |v|^2}\\
	&= \left[(4\alpha^2 C_1G_0 - 2\alpha c_1) |v|^{\gamma+2} - 2\alpha C_1G_0 |v|^{\gamma}\right] e^{-\alpha|v|^2},
	\end{align*}
	so for $\alpha = c_1 / (4C_1G_0)$, we have $\bar a^f_{ij}\partial_{ij}\phi \leq  -C G_0^{-1}|v|^{\gamma+2}$. With the bound on $\bar c^f$ in Lemma \ref{l:abc}, this implies
	\[\bar a^f_{ij} \partial_{ij}\phi  + \bar c^f \phi \leq \left[-CG_0^{-1}|v|^{\gamma+2} + C|v|^{\gamma}\right]\phi(v).\]
	This right-hand side is bounded by $-CG_0^{-1}|v|^{\gamma+2}$ if 
	\[|v|\geq CG_0^{1/2},\]
	for some (new) universal constant $C$, which establishes (a).
	
	For (b), the upper and lower bounds in Lemma \ref{l:abc} imply
	\begin{align*}
	\bar a^f_{ij}\partial_{ij}\phi &\geq \left[(4\alpha^2|v|^2 - 2\alpha)c_1 |v|^{\gamma}  -  2\alpha C_1G_0 |v|^{\gamma+2}\right] e^{-\alpha |v|^2}\\
	&= \left[(4\alpha^2 c_1 - 2\alpha C_1G_0) |v|^{\gamma+2} - 2\alpha c_1 |v|^\gamma\right] e^{-\alpha|v|^2},
	\end{align*}
so that if $\alpha = C_1 G_0 / (4c_1)$, we have $\bar a^f_{ij} \partial_{ij} \phi \geq C G_0^2 |v|^{\gamma+2} - CG_0 |v|^\gamma \geq C G_0^2 |v|^{\gamma+2}$, provided $|v| \geq CG_0^{-1/2}$. Since $\bar c^f\phi \geq 0$, we are done.
\end{proof}

We are ready to prove the first assertion of Theorem \ref{t:main}.
\begin{theorem}\label{t:gaussian}
%\begin{enumerate}
%	\item[(a)] If $f_{in} \leq C_1 e^{-\alpha |v|^2}$ with $\alpha$ as in Lemma \ref{l:super}, then 
%	   \[ f(t,x,v) \leq C e^{-\alpha|v|^2}\]
%	   for all $(t,x,v)$, where $C$ depends on unvi
For any weak solution $f$ of \eqref{e:main} satisfying \eqref{e:hydro} and \eqref{e:G0}, we have
\[f(t,x,v) \leq C G_0^P\left(e^{Ct^{-2/\gamma}}+1\right)e^{-\alpha |v|^2}\]
for all $t>0$, with $P$ as in Theorem \ref{t:upper}, $\alpha$ from Lemma \ref{l:super}(a), and $C$ universal.
\end{theorem}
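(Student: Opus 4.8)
The plan is to prove the estimate by comparing $f$ with a Gaussian supersolution of the linear Landau equation and applying the maximum principle for $H^1$ weak solutions (as in the appendix of \cite{cameron2017landau}). Since $\bar b^f_i=-\partial_{v_j}\bar a^f_{ij}$, a weak solution of \eqref{e:main} satisfies $\partial_t f+v\cdot\nabla_x f=\bar a^f_{ij}\partial_{ij}f+\bar c^f f$, so it is enough to construct $g$ with $\partial_t g+v\cdot\nabla_x g\ge \bar a^f_{ij}\partial_{ij}g+\bar c^f g$ that dominates $f$ on the relevant parabolic boundary.

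Small velocities are handled directly: for $|v|\le R_0:=CG_0^{1/2}$ (the threshold in Lemma \ref{l:super}(a)), Theorem \ref{t:upper} gives $f\le CG_0^P(1+t^{-d/2})$, and since $\alpha R_0^2$ is a universal constant (as $\alpha=C/G_0$) this is at most $CG_0^P(1+t^{-d/2})e^{\alpha R_0^2}e^{-\alpha|v|^2}$, which is bounded by the claimed quantity because $1+t^{-d/2}\lesssim e^{Ct^{-2/\gamma}}+1$. For $|v|\ge R_0$ I would take $g(t,v)=\mu(t)\phi(v)$ with $\phi(v)=e^{-\alpha|v|^2}$, $\alpha$ from Lemma \ref{l:super}(a), and $\mu(t)=CG_0^P\big(e^{At^{-2/\gamma}}+1\big)$ for a large universal constant $A$. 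Since $g$ is $x$-independent, the supersolution inequality reduces to $\mu'(t)\phi\ge \mu(t)\big(\bar a^f_{ij}\partial_{ij}\phi+\bar c^f\phi\big)$, and by Lemma \ref{l:super}(a) it suffices that $\mu'(t)+cG_0^{-1}\mu(t)|v|^{\gamma+2}\ge 0$, i.e. $|v|\ge \rho(t):=\big(G_0|\mu'(t)|/(c\mu(t))\big)^{1/(\gamma+2)}$; thus $g$ is a supersolution on $\{|v|>\max(R_0,\rho(t))\}$, with $\rho(t)\to\infty$ as $t\to0$.

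The exponent $2/\gamma$ is forced so that this set does not shrink too fast: with $\mu$ as above, $\alpha\rho(t)^2\lesssim A^{2/(\gamma+2)}t^{-2/\gamma}\le\tfrac12At^{-2/\gamma}$ once $A$ is large (using $2/(\gamma+2)<1$ and $G_0\ge1$), i.e. well below the exponent $At^{-2/\gamma}$ appearing in $\mu$. Running the maximum principle on $\{0<t\le T,\ |v|>\max(R_0,\rho(t))\}$: on the lateral boundary $|v|=\max(R_0,\rho(t))$ one has $\alpha|v|^2\le c_0+\tfrac12At^{-2/\gamma}$ for a universal $c_0$, so $g=\mu(t)\phi(v)\gtrsim CG_0^Pe^{(A/2)t^{-2/\gamma}}$, dominating the bound $f\le CG_0^P(1+t^{-d/2})$ from Theorem \ref{t:upper} once $A$ and the prefactor in $\mu$ are large; because $\rho(t)\to\infty$ as $t\to0$, the region pinches off at $|v|=\infty$ and there is no initial data to match, while the polynomial decay of $f$ (Theorem \ref{t:upper}) and the Gaussian decay of $g$ handle $|v|=\infty$ by a standard approximation. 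This gives $f\le g$ there. Finally, on the remaining annulus $R_0\le|v|\le\rho(t)$ (nonempty only for small $t$), Theorem \ref{t:upper} again gives $f\le CG_0^P(1+t^{-d/2})$, and since $e^{-\alpha|v|^2}\ge e^{-\alpha\rho(t)^2}\gtrsim e^{-\frac12At^{-2/\gamma}}$ there, this is dominated by $CG_0^P\big(e^{At^{-2/\gamma}}+1\big)e^{-\alpha|v|^2}$ after enlarging $A$; combining the three regions completes the proof.

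I expect the calibration of $\mu$ to be the main obstacle: the weight must blow up fast enough as $t\to0$ to overwhelm the only a priori bound near $t=0$, namely $f\lesssim G_0^Pt^{-d/2}$ from Theorem \ref{t:upper}, yet slowly enough that $\alpha\rho(t)^2\lesssim At^{-2/\gamma}$, so that the set where $g$ is a supersolution still reaches $|v|=R_0$ on the relevant time scale. Balancing these demands against the scalings $\alpha\approx G_0^{-1}$ and $R_0\approx G_0^{1/2}$ of Lemma \ref{l:super}(a), the $|v|^{\gamma+2}$ coercivity of $\bar a^f$, and $\rho(t)\approx(|\mu'|/\mu)^{1/(\gamma+2)}$ is exactly what pins down the weight $e^{At^{-2/\gamma}}$. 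A more routine point is justifying the maximum principle on a region that is unbounded in $x$ and $v$ and open at $t=0$, which is where the decay in Theorem \ref{t:upper} (rather than just an $L^\infty$ bound) is used.
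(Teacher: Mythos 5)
Your proposal is correct and follows essentially the same route as the paper: a comparison function $e^{Ct^{-2/\gamma}}e^{-\alpha|v|^2}$ built from Lemma \ref{l:super}(a), the supersolution property for $|v|$ above a threshold $\rho(t)\sim t^{-(2/\gamma+1)/(\gamma+2)}$, the observation that $2(2/\gamma+1)/(\gamma+2)=2/\gamma$ so the weight absorbs the Gaussian factor on $\{|v|\lesssim\rho(t)\}$ where Theorem \ref{t:upper} controls $f$ directly, and the maximum principle with the blow-up of the weight at $t=0$ replacing any initial-data hypothesis. The only (immaterial) difference is that the paper runs the maximum principle on $[f-2C_1\psi]_+$ over all of $(0,1]\times\R^{2d}$ and treats $t\ge 1$ by a separate constant-in-time barrier, whereas you restrict to the exterior region and carry the $+1$ in the weight.
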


\begin{proof} 

Let $p = 2/\gamma$ and $\psi(t,x,v) = e^{C_0t^{-p}} e^{-\alpha |v|^2}$, with $C_0$ to be determined. For $|v| \geq C G_0^{1/2}$,  Lemma \ref{l:super} implies
\[-\partial_t \psi - v\cdot \nabla_x \psi+ \bar a^f_{ij}\partial_{ij}\psi + \bar c^f \psi \leq \psi\left(C_0 p t^{-p-1} - CG_0^{-1}|v|^{\gamma+2}\right),\]
which is nonpositive whenever 
\[|v|\geq \max\{C G_0^{1/2}, (p C_0 G_0 t^{-p-1}/C)^{1/(\gamma+2)}\}\geq C C_0^{1/(\gamma+2)} G_0^{1/2} t^{-(p+1)/(\gamma+2)}.\] 

On the other hand, let $|v|^2 \leq C C_0^{2/(\gamma+2)} G_0t^{-2(p+1)/(\gamma+2)}$. Since $p = 2/\gamma$, we have $2(p+1)/(\gamma+2) = p$. Therefore, since $\alpha = CG_0^{-1}$, we have
\[ \psi(t,x,v) = \exp(C_0 t^{-p}-\alpha|v|^2) \geq  \exp(C_0 t^{-p} -  C C_0^{2/(\gamma+2)}t^{-p}),\]
so for $C_0$ sufficiently large (depending only on universal constants), this right-hand side approaches $\infty$ as $t\to 0$.

Let $C_1 \lesssim G_0^P$ be the constant from Theorem \ref{t:upper}. For $t\in (0,1]$, we have $2C_1\psi(t,x,v) \geq 2C_1 e^{C_0 t^{-p}}\geq C_1(1+t^{-d/2}) \geq f(t,x,v)$ in 
\[\{(t,x,v): 0<t\leq 1, |v|^2 \leq C C_0^{2/(\gamma+2)} G_0 t^{-p} \}.\] 
Therefore, we have
\[(-\partial_t - v\cdot \nabla_x + \bar a^f_{ij}\partial_{ij} + \bar c^f)[f - 2C_1\psi]_+ \geq 0, \quad \mbox{in } (0, 1] \times \R^{2d},\]
and $[f(0,x,v)-2C_1\psi(0,x,v)]_+ = 0$ for all $(x,v)\in \R^{2d}$. The maximum principle implies $[f-2C_1\psi]_+ = 0$ in $(0, 1] \times \R^{2d}$, so that
\[f(t,x,v) \leq 2C_1 e^{C_0 t^{-p}}e^{-\alpha|v|^2}, \quad t\in (0,1].\]

Next, with $R_0 = CG_0^{1/2}$ as in Lemma \ref{l:super}(a), choose $C_2$ such that $C_2e^{-\alpha R_0^2}\geq 2C_1 \geq  \|f\|_{L^\infty([1,T]\times \R^{2d})}$. Since $\alpha R_0^2$ is bounded independently of $G_0$, we can choose $C_2 \lesssim C_1$. Then we can apply the maximum principle to $[f-(e^{C_0}+C_2) e^{-\alpha |v|^2}]_+$ on $[1,T]\times \R^{2d}$ and conclude the proof.
\end{proof}

We now show that the $(\gamma+2)$ moment of $f$ is bounded independently of $G_0$ on $[t_0,T]$ for any $t_0>0$. (We are seeking a bound that does not depend quantitatively on the $(\gamma+2)$ moment of the initial data, so we cannot hope for a bound that is uniform in $t\in [0,T]$.)

\begin{theorem}\label{t:gamma2}
	With $f$ as above, for any $\eps>0$, there exists a constant $C_\eps$, depending only on universal constants and $\eps$, such that 
	\[\int_{\R^d} |v|^{\gamma+2} f(t,x,v) \dd v \leq C_\eps \left(1+t^{-1/(1-\gamma/2 - \eps)}\right), \quad t\in (0,T], \,x\in \R^d.\]
%	where $C$ depends only on universal constants and $t_0$.
\end{theorem}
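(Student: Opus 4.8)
The plan is to run a nonlinear ODE argument on the quantity $G_f(t,x) = \int |v|^{\gamma+2} f \dd v$, exploiting the fact that the Gaussian upper bound of Theorem \ref{t:gaussian} already controls $f$ pointwise (with a $G_0$-dependent constant that blows up as $t\to 0$), but that the structure of the Landau equation produces a \emph{negative} feedback term in the evolution of high moments that is strong enough to absorb the $G_0$-dependence after a short time. Concretely, I would differentiate $\int |v|^{\gamma+2} f \dd v$ in $t$, integrate by parts using the weak formulation, and organize the resulting terms: the top-order contribution coming from $\nabla_v\cdot(\bar a^f \nabla_v f)$ against the weight $|v|^{\gamma+2}$ should, after using the coercivity of $\bar a^f$ in the direction $v/|v|$ (the $(1+|v|)^\gamma$ lower bound of Lemma \ref{l:abc}) together with the decomposition $D^2(|v|^{\gamma+2})$ into radial and tangential parts, yield a dissipative term of the shape $-c\int |v|^{2\gamma+2} f \dd v$ plus lower-order junk. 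The terms from $\bar b^f\cdot\nabla_v f$ and $\bar c^f f$ are genuinely lower order by the bounds $|\bar b^f|\lesssim (1+|v|)^{\gamma+1}$, $\bar c^f \lesssim (1+|v|)^\gamma$, contributing at worst $\int |v|^{2\gamma+2} f\dd v$-type quantities with small constants, or $\int |v|^{\gamma+2} f \dd v$. The transport term $v\cdot\nabla_x f$ integrates to zero in $v$, but since we want a pointwise-in-$x$ bound we should localize in $x$ (or observe that the whole argument is really an ODE inequality at each fixed $x$ once the $x$-derivatives are handled — this is the kind of step where Golse--Imbert--Mouhot--Vasseur–style localization enters, but here it may be cleaner to use the global structure and a comparison/barrier argument instead of a literal ODE).

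The key interpolation is: by Hölder's inequality (or by splitting the velocity integral at a radius $\rho$ and using the crude bound $f \le \|f\|_\infty$ on $|v|\le\rho$ and mass/energy control on $|v|>\rho$), one controls $\int |v|^{\gamma+2} f \dd v$ by an expression of the form $\|f\|_{L^\infty}^{\theta}$ times a bounded quantity, so that the dissipation term $-c\int|v|^{2\gamma+2}f\dd v$ dominates, via Jensen/Hölder applied to the measure $f\dd v/M_f$, a superlinear power of $G_f$ itself: roughly $\int |v|^{2\gamma+2}f \gtrsim (\int|v|^{\gamma+2}f)^{1+\sigma}$ for some $\sigma>0$ depending on how $2\gamma+2$ compares to $\gamma+2$ relative to lower moments. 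This gives a differential inequality $\tfrac{d}{dt} G_f \le -c\, G_f^{1+\sigma} + (\text{universal})$, from which the Bernoulli-type bound $G_f(t) \lesssim 1 + t^{-1/\sigma}$ follows \emph{independently of the initial value} $G_0$ — exactly the $G_0$-independent, $t$-degenerate estimate claimed. Matching the exponent $1/\sigma$ to $1/(1-\gamma/2-\eps)$ will fix the precise powers; the $\eps$ presumably absorbs a loss incurred in the interpolation between $\gamma+2$ and $2\gamma+2$ moments (the gain $2\gamma+2 - (\gamma+2) = \gamma$ over a moment scale governs $\sigma$, and controlling the lower moments uniformly costs the arbitrarily small $\eps$).

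The main obstacle I anticipate is twofold. First, justifying the integration by parts and the differentiation under the integral sign rigorously for a mere weak solution: the Gaussian bound of Theorem \ref{t:gaussian} gives $L^\infty$ control and hence integrability of all the needed moments for $t>0$, and one can mollify in time or work with the weak formulation tested against a (smoothly truncated) $|v|^{\gamma+2}$, but care is needed because the natural test function is unbounded and we must truncate and pass to the limit, checking that the dissipative sign survives. Second, and more delicate, is extracting a genuinely \emph{superlinear} ($\sigma>0$) dissipation: the coercivity of $\bar a^f$ that one controls cheaply is in the single radial direction $v/|v|$, giving only $\bar a^f_{ij}\partial_{ij}|v|^{\gamma+2}\lesssim -c|v|^\gamma \cdot |v|^{\gamma} = -c|v|^{2\gamma}$ at leading order when hitting the radial second derivative — one must check that the weight and the $(d-1)$ tangential directions (where the Laplacian of $|v|^{\gamma+2}$ is positive, order $|v|^\gamma$, multiplied by the large tangential ellipticity $(1+|v|)^{\gamma+2}$) do not destroy the gain. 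The resolution should be the same mechanism used to prove Lemma \ref{l:super}: the radial term $\partial_{rr}$ of $|v|^{\gamma+2}$ is $(\gamma+2)(\gamma+1)|v|^\gamma$, positive, but after \emph{testing against $f$ and integrating by parts once more} the dangerous terms reorganize, or alternatively one uses that $\int \bar a^f_{ij}\partial_i(|v|^{\gamma+2})\partial_j f$ with $\bar b^f = -\nabla\cdot\bar a^f$ recombines favorably — this bookkeeping, tracking every constant's (in)dependence on $G_0$, is the technical heart of the proof.
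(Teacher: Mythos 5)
Your proposal takes a genuinely different route from the paper --- a Desvillettes--Villani-style moment ODE with superlinear dissipation --- and it has two gaps that I do not think your sketch resolves. First, the transport term. At fixed $x$, differentiating $G_f(t,x)=\int|v|^{\gamma+2}f\dd v$ produces $-\nabla_x\cdot\int v\,|v|^{\gamma+2}f\dd v$, a divergence of an even higher moment flux; it does \emph{not} vanish upon integrating in $v$ alone, only after also integrating in $x$. So the ODE closes only for $\|G_f\|_{L^1_x}$, not for the pointwise-in-$x$ quantity the theorem requires, and under the standing assumptions there is no control on $x$-derivatives of moments that would let you absorb this flux. You flag this and suggest ``localization'' or a ``comparison/barrier argument instead of a literal ODE,'' but a barrier argument in $(t,x,v)$ is precisely what Theorem \ref{t:gaussian} already is, and it only yields the $G_0$-dependent rate $\alpha\approx G_0^{-1}$; the whole difficulty is removing that dependence. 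Second, the sign of the dissipation. Testing the divergence-form equation against $\varphi=|v|^{\gamma+2}$ and integrating by parts, the top-order term is $\int f\,\bar a^f_{ij}\partial_{ij}\varphi\dd v$ plus $\int \bar c^f f\varphi\dd v$; since $D^2(|v|^{\gamma+2})$ is positive definite (both radial and tangential eigenvalues are $\approx|v|^\gamma>0$) and $\bar a^f\ge 0$, $\bar c^f\ge0$, \emph{every} one of these terms is nonnegative, of size $+\int|v|^{2\gamma+2}f\dd v$ at worst. The negative term $-c\int|v|^{s+\gamma}f\dd v$ in the homogeneous moment estimates comes from symmetrizing the double integral $\iint f(v)f(v_*)|v-v_*|^\gamma(\cdots)\dd v\dd v_*$ in $(v,v_*)$, a cancellation that is invisible to the coefficient bounds of Lemma \ref{l:abc} and that, again, is only available after integrating the full collision operator --- it does not survive the inhomogeneous, pointwise-in-$x$ setting in any obvious way.

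For contrast, the paper's proof sidesteps both issues: it never differentiates a moment. It takes the Gaussian bound $f\le Ke^{-\alpha|v|^2}$ of Theorem \ref{t:gaussian} (valid for $t\ge t_*$, with $\alpha\approx G_0^{-1}$ and $K\approx e^{Ct_*^{-2/\gamma}}G_0^P$), interpolates it against the energy bound via H\"older with a large exponent $p$, and uses the Stirling estimate $\bigl(\int e^{-|w|^2}|w|^{p\gamma+2}\dd w\bigr)^{1/p}\lesssim p^{\gamma/2}$ to show that after time $t_*$ the $(\gamma+2)$-moment is bounded by $C_\eps t_*^{-1}G_0^{\mu}$ with $\mu=\gamma/2+\eps<1$. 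Since this output is itself an admissible value of $G_0$ for the solution restarted at time $t_*$, iterating over a dyadic sequence of times drives the exponent of $G_0$ to $\mu^n\to0$ while the accumulated constants converge, producing the $G_0$-free bound with exponent $1/(1-\mu)=1/(1-\gamma/2-\eps)$. (In particular, the $\eps$-loss does not come from interpolating between the $\gamma+2$ and $2\gamma+2$ moments as you guessed, but from taming the residual powers $G_0^{P/p}$ and $\alpha^{-(d+2)/(2p)}$ in the H\"older step.) If you want to salvage your approach, you would essentially have to reprove the homogeneous moment-production mechanism locally in $x$, which is a substantially harder task than the paper's self-improvement argument.
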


%In particular, we have $\int_0^{T} \sup_{x\in\R^d} G_f(t,x) \dd t < \infty$. (Compare \cite[Theorem 3]{desvillettes2000landau} in the spatially homogeneous case.) 
\begin{proof}
Let $t_* \in (0,1]$ be arbitrary. By Theorem \ref{t:gaussian}, $f$ is bounded by $K e^{-\alpha |v|^2}$ on $[t_*,T]$, with $\alpha = C/G_0$ and $K \lesssim e^{Ct_*^{-2/\gamma}} G_0^P$. We will interpolate between this pointwise Gaussian decay and the energy bound. 
%Suppose $f \leq C \Lambda^{Q} e^{-\alpha |v|^2}$ with $\alpha \leq C(1+G_0)^{-1}$ and $\Lambda \le%q C(1+G_0)$. Here, $Q$ depends only on the dimension $d$. 
For $p>1$ to be chosen later, let $q$ be such that $1/p + 1/q = 1$. For $t\geq t_*$, we have
\begin{align*}
\int_{\R^d} |v|^{\gamma+2} f(t,x,v) \dd v &\leq K^{1/p} \int_{\R^d} e^{-\alpha|v|^2/p} f^{1/q} |v|^{\gamma+2/p} |v|^{2/q} \dd v\\
&\leq K^{1/p} \left( \int_{\R^d} e^{-\alpha |v|^2} |v|^{p\gamma + 2} \dd v\right)^{1/p}\left(\int_{\R^d} |v|^2 f\dd v\right)^{1/q}\\
&\leq K^{1/p} E_0^{1/q} C_p \alpha^{-(d+2)/(2p) - \gamma/2} \\
&\leq  e^{(C t_*^{-2/\gamma})/p}G_0^{P/p} E_0 C_p (C G_0)^{(d+2)/(2p) + \gamma/2},
\end{align*} 
with 
\begin{align*}
C_p = \left(\int_{\R^d} e^{-|w|^2} |w|^{p\gamma+2} \dd w\right)^{1/p} &= \left(C_d\int_0^\infty e^{-r^2} r^{p\gamma+d+1} \dd r\right)^{1/p} %\\ 
%&= \left(C_d\int_0^\infty e^{-s} s^{(p\gamma+d)/2} \dd s\right)^{1/p}
 = \left(C_d \Gamma\left(\frac {p\gamma + d }{2}+1\right)\right)^{1/p}.
 \end{align*}
%and $C_d$ depending on dimension. 
It follows from Stirling's approximation that $\lim_{x\to \infty} \Gamma(ax+b)^{1/x}/x^a$ exists for any $a, b >0$. % (which follows from Stirling's approximation),
Therefore, we have $C_p \lesssim p^{\gamma/2}$. Choosing %Let $\eps \in (0,1/2)$, and choose 
\[p = \max\{Ct_*^{-2/\gamma}, [(d+2)/2 + P]/\eps\},\] 
we obtain
\[\int_{\R^d} |v|^{\gamma+2} f(t,x,v) \dd v \leq  C e^1 E_0 G_0^{\gamma/2+\eps}p ^{\gamma/2} \leq C_\eps t_*^{-1} G_0^{\gamma/2+\eps},\]
with $C_\eps$ depending on universal constants and $\eps$. % Choose $\eps>0$ such that $\gamma + \eps <1$, and choose $p$ larger if necessary so that $(d+2)/2p < \eps$. (We can assume $p \approx G_0t_0^{-2/\gamma}$ still, because otherwise the $\gamma+2$ moment is bounded above depending only on $d$, $\gamma$, and $t_0$.) 
Let $\mu = \gamma/2+\eps < 1$. Then we finally have
\begin{equation}\label{e:t0}
\int_{\R^d} |v|^{\gamma+2} f(t,x,v) \dd v \leq C_\eps t_*^{-1}G_0^{\mu}, \quad t\geq t_*.
\end{equation}

To apply \eqref{e:t0} iteratively, we need to wait a short amount of time at each step. Let $t_0 \in(0,1]$ and $n\in \mathbb N$ be arbitrary, and define the following sequence of times:
\[t_{1,n} = 2^{-n+1}t_0, \,\, t_{2,n} = 2^{-n+2}t_0, \,\, t_{3,n} = 2^{-n+3}t_0, \ldots,t_{n-1,n} = t_0/2,\,\, t_{n,n} = t_0.\]
Apply \eqref{e:t0} to $f$ with $t_* = t_{1,n}$:
\[\int_{\R^d} |v|^{\gamma+2} f(t,x,v) \dd v \leq  C_\eps t_0^{-1} 2^{n-1} G_0^{\mu} =: G_{1,n}, \quad t\geq t_{1,n}.\]
Using the new bound $G_{1,n}$ for $G_f(t,x)$, we apply \eqref{e:t0} to $f(t_{1,n}+t,x,v)$ with $t_* = t_{2,n}-t_{1,n} = 2^{-n+1}t_0$ and obtain
\[\int_{\R^d} |v|^{\gamma+2} f(t,x,v) \dd v \leq C_\eps t_0^{-1} 2^{n-1} G_{1,n}^{\mu} = (C_\eps t_0^{-1})^{1+\mu} 2^{(n-1)(1+\mu)} G_0^{\mu^2} =: G_{2,n}, \quad t\geq t_{2,n}.\]
Continuing, we apply \eqref{e:t0} to $f(t_{2,n} + t,x,v)$ with $t_* = t_{3,n} - t_{2,n} = 2^{-n+2}t_0$:
\begin{align*}
\int_{\R^d} |v|^{\gamma+2} f(t,x,v) \dd v &\leq C_\eps t_0^{-1} 2^{n-2} G_{2,n}^{\mu}\\
 &= (C_\eps t_0^{-1})^{1+\mu+\mu^2} 2^{n-2} 2^{(n-1)(\mu+\mu^2)} G_0^{\mu^3} =: G_{3,n}, \quad t\geq t_{3,n}.
\end{align*}
Repeating this $n$ times, we have
\[\int_{\R^d} |v|^{\gamma+2} f(t,x,v) \dd v \leq  (C_\eps t_0^{-1})^{1+\mu+\cdots+\mu^{n-1}} C_n G_0^{\mu^n}   =:G_{n,n}, \quad t\geq t_0,\]
where 
\[C_n = 2\cdot 2^{2\mu} \cdot 2^{3\mu^2}\cdots 2^{(n-2)\mu^{n-3}} 2^{(n-1)[\mu^{n-2}+\mu^{n-1}]}.\]
Let the number of steps $n\to \infty$, and we have
\[C_n \to 2^{\sum_{k=1}^\infty k\mu^{k-1}} = 2^{1/(1-\mu)^2}. \]
This implies
\[\int_{\R^d} |v|^{\gamma+2} f(t,x,v) \dd v \leq  C(C_\eps t_0^{-1})^{1/(1-\mu)}, \quad t\geq t_0,\]
as desired. For general $t_0 \in (0,T]$, we proceed as above, replacing $t_0$ with $\min\{1,t_0\}$, and conclude the statement of the theorem.
\end{proof}

We can now prove the second assertion of Theorem \ref{t:main}: %For any $t_0 >0$, $G_f(t,x)$ is bounded independently of $G_0$ on $[t_0/2,T]\times \R^{2d}$ by 
Theorem \ref{t:gamma2} implies $G_f(t,x) \leq C_\eps (1+(t_0/2)^{-1/(1-\gamma/2-\eps)})$ on $[t_0/2,T]\times \R^{2d}$, for some $\eps\in (0,1/2)$. Applying Theorem \ref{t:gaussian} to $f(t_0/2+t,x,v)$ with this new bound for $G_f(t,x)$, we conclude 
\[f(t_0,x,v)\leq C (t_0/2)^{-P/(1-\gamma/2-\eps)} e^{C(t_0/2)^{-2/\gamma}} e^{-C t_0^{1/(1-\gamma/2-\eps)}|v|^2}, \]
as desired. 
%with $J(t_0)$ and $\beta(t_0)$ depending only on universal constants and $t_0$. %(In particular, they are independent of $G_0$.)

%(Do we need to do the maximum principle again with time-dependent $\phi$?)

%(Optimize this proof to get better exponent of $E_0$? For now, $C$ also depends on $E_0$, but possibly not in Gaussian lower bound case.)

Finally, we derive Gaussian lower bounds for $f$. %at $t=0$ are propagated for positive times. In this case, we cannot get a useful statement that is independent of $G_0$, since the assumption on the initial data degenerates as the bound on $G_f(t,x)$ goes to $\infty$. 
%The proof of part (b) of the following theorem is reminiscent of \cite[Theorem 9]{desvillettes2000landau}.

\begin{proof}[Proof of Theorem \ref{t:lower}] Throughout this proof, we will write 
	\[L = -\partial_t - v\cdot \nabla_x + \bar a^f_{ij}\partial_{v_iv_j} + \bar c^f.\]
As usual, we sum over repeated indices.

(a) Let $\psi_1(t,x,v) = e^{-\mu t} e^{-\alpha |v|^2}$, with $\mu>0$ to be determined. For $|v|\geq R_0 := C G_0^{-1/2}$, Lemma \ref{l:super}(b) implies 
\[L\psi_1 \geq \left(\mu + CG_0^2 |v|^{\gamma+2}\right)\psi_1 \geq 0. \]
(Note that $\bar c^f \psi_1 \geq 0$.) For $|v|\leq R_0$, it follows from Lemma \ref{l:abc} that 
\[\bar a^f_{ij} \partial_{v_iv_j}\psi_1 \geq -CG_0(1+|v|)^{2+\gamma}|D_v^2 \psi_1| \geq -CG_0 R_0^{\gamma+4} |\psi_1|,\]
 which is bounded from below by some constant depending on $R_0$. Choosing $\mu$ sufficiently large, we have $L\psi_1 \geq 0$ in $(0,T]\times \R^{2d}$, and the conclusion follows from applying the maximum principle to $c\psi_1 - f$.

(b) %By assumption, we have $f(0,x,v)\geq \delta$ in $B_r(v_0)$, for all $x\in \R^d$. We may suppose $r\leq 1$. 
For any $t_1\in (0,T]$ and $x\in \R^d$, the hydrodynamic bounds \eqref{e:hydro} imply for any $r>0$,
\[m_0 \leq \int_{\R^d} f(t_1,x,v) \dd v \leq \|f(t_1,x,\cdot)\|_{L^\infty(B_r)} C_d r^d + E_0 r^{-2}.\] 
Clearly, there exists $r>0$ such that $r = \|f(t_1,x,\cdot)\|_{L^\infty(B_r)}^{-1/(d+2)} E_0^{1/(d+2)}$. Theorem \ref{t:upper} implies $r\gtrsim t_1^{d/(2(d+2))}$. With this choice of $r$, we have $\|f(t_1,x,\cdot)\|_{L^\infty(B_r)} \gtrsim m_0^{(d+2)/2} E_0^{d/2}$.  Applying a scaled version of the Harnack inequality \cite[Theorem 4]{golse2016}, we have for any $t_2 > t_1$,
\begin{equation}\label{e:Br}
\inf_{v\in B_r} f(t_2,x,v) \geq \delta, \quad \mbox{for all } x\in \R^d,
\end{equation}
with $\delta>0$ depending on $t_2-t_1$, $r$, and universal constants. 

Now, let $t_0>0$ be arbitrary, and apply \eqref{e:Br} with $t_1 = t_0/2$ and $t_2 = t_0$. The constants $r$ and $\delta$ depend only on universal constants and $t_0$, and they are nondecreasing as $t_0$ increases. (This can be seen either by tracking the dependencies in \eqref{e:Br} or by applying the same reasoning to $f(t'+t,x,v)$ for any $t'>0$.) We conclude
\begin{equation}\label{e:Br2}
\inf_{v\in B_r} f(t,x,v) \geq \delta, \quad \mbox{for all } t\geq t_0, \,x\in \R^d,
\end{equation}
with $\delta, r>0$ depending on universal constants and $t_0$.

Next, we show $f(t,x,\cdot)$ is bounded below by a Gaussian. %Define $g(t,x,v) = f(t,x + tv_0,v_0+v)$, and note that $Lg = 0$. Let $G(t) = C_\eps (1+t^{-1/(1-\gamma/2-\eps)})$ be the upper bound for $G_f(t,x)$ provided by Theorem \ref{t:gamma2}, with some arbitrary choice of $\eps\in (0,1/2)$. 
Define 
%\[\psi_3(t,x,v) = e^{-\mu (t-t_0)} e^{- \rho[1+ (t-t_0)^{-1}]|v|^2},\] 
\[\psi_2(t,x,v) = e^{- \rho[1+ (t-t_0)^{-1}]|v|^2},\] 
with $\rho>0$ to be determined. Letting $\Omega = \{t\geq t_0, x\in \R^d, |v| \geq r/2\}$, the function $\psi_2$ can be extended smoothly by $0$ on the part of $\partial \Omega$ with $t= t_0$ (since $|v| \geq r/2$ in $\overline \Omega$). 
By \eqref{e:Br2}, we have $f\geq \delta \psi_2$ on $\partial\Omega$. It remains to show $\psi_2$ is a subsolution in $\Omega$. Let $\omega(t) =  \rho [1+(t-t_0)^{-1}]$. By the calculations of Lemma \ref{l:super}(b), we have for $|v|\geq r/2$,
\[\bar a^f_{ij} \partial_{v_iv_j} \psi_2 \geq \left[\left(4\omega^2(t) c_1 - 2C(t_0)\omega(t)\right) |v|^{\gamma+2} - 2\omega(t) c_1 |v|^\gamma\right] \psi_2, \]%\geq [4\omega^2(t) r^{2+\gamma}c_1 - C \omega(t) (1+r^{\gamma})]\psi,\]
where $C(t_0)$ is given by Theorem \ref{t:gamma2}. For $\rho$ large enough (depending on universal constants, $t_0$, and $r$) we have $\bar a^f_{ij} \partial_{v_iv_j} \psi_2 \geq C \omega^2(t) |v|^{\gamma+2}\psi_2$ in $\Omega$. This implies
\begin{align*}
L\psi_2 &\geq \left[\omega'(t)|v|^2 + C \omega^2(t) |v|^{\gamma+2} \right] \psi_2.
\end{align*}
Since $\omega'(t) = -\rho (t-t_0)^2$, we can choose $\rho$ sufficiently large, depending on universal constants, $t_0$, and $r$, such that $\omega^2(t)|v|^{\gamma+2} \geq \omega'(t)|v|^2$ and $L\psi_2 \geq 0$ for $|v|\geq r/2$. % \geq Since $G^2(t) = C t^{-2/(1-\gamma/2-\eps)}$ approaches $+\infty$ faster than $G'(t) = C t^{-(2-\gamma/2-\eps)/(1-\gamma/2-\eps)}$ as $t\to 0$, our expression for $L\psi_3$ is nonnegative for $|v|\geq r/2$ 
Applying the maximum principle in $\Omega$, we conclude 
%\begin{equation}\label{e:lower_bound}
%f(t,x,v) \geq \delta e^{-\mu(t-t_0)} e^{-\rho[1+(t-t_0)^{-1}]|v|^2}, \quad t\geq t_0.
%\end{equation}
\begin{equation}\label{e:lower_bound}f(t,x,v) \geq \delta e^{-\rho[1+(t-t_0)^{-1}]|v|^2}, \quad t\geq t_0.
\end{equation}
The constants $\delta = \delta_{t_0}$ and $\rho = \rho_{t_0}$ degenerate as $t_0\to 0+$. Replacing $t_0$ with $t_0/2$ in \eqref{e:lower_bound}, we conclude $f(t,x,v) \geq \delta_{t_0/2} e^{-\rho_{t_0/2} [1+(t_0/2)^{-1}]|v|^2}$ for $t\geq t_0$, as desired.%. For $t > 2$, we can apply \eqref{e:lower_bound} with $t_0 = t-1$ and obtain 
\end{proof}

%\begin{remark}
%In the proof of Theorem \ref{t:lower}(b), if we replace $\xi(x-x_0)$ with $\xi(x-x_0-(t-t_0)v_0)$ in the definition of $\psi_2$, the term $2v\cdot \nabla \xi(x-x_0)$ in \eqref{e:Lpsi} would be replaced by $2(v-v_0)\cdot \nabla\xi(x-x_0-(t-t_0)v_0)$, and since $|v-v_0|\leq r$, this would allow us to choose a smaller value of $\mu$. This does not change the statement of Theorem \ref{t:lower}, but it demonstrates that lower bounds are propagated more efficiently along the characteristics $x = x_0 + (t-t_0) v_0$.
%\end{remark}
\appendix

\section{Dependence of local estimates on ellipticity constants}\label{s:A}

In \cite{golse2016}, a Harnack inequality and local $C^\alpha$ estimate are proven for kinetic Fokker-Planck equations of the form \eqref{e:FP}.  We are concerned only with the local $L^\infty$ estimate (Theorem \ref{t:local} above), which does not require the full strength of the Harnack inequality. In this appendix, we estimate the dependence of the constant on $\Lambda$, $\lambda$, and $\|B\|_{L^\infty}$. %(In \cite{golse2016}, it is assumed that $\|B\|_{L^\infty} \leq \Lambda$, but we do not make this assumption here.) 
The dependence on $\lambda$ and $\|B\|_{L^\infty}$ is not relevant for the present article, but may be useful to know in other contexts. For simplicity, we will assume $\Lambda$, $\|B\|_{L^\infty} \geq 1$ and $\lambda \leq 1$.

The proof of the $L^\infty$ estimate (Theorem 12 in \cite{golse2016}) proceeds in the following steps:
%\begin{enumerate}
%\item[1.] A global $L^2$ estimate for $\nabla_v g$, $D_x^{\frac 1 3}g$, and $D_t^{\frac 1 3} g$.
%
%\item[2.] A local energy estimate of Caccioppoli type.
%
%\item[3.] Combining the prior two steps to conclude local gain of integrability.
%
%\item[4.] Using De Giorgi's iteration technique, bootstrap the gain of integrability up to $L^\infty$.
%\end{enumerate}

\noindent \emph{Step 1: Global regularity estimate} (\cite[Lemma 10]{golse2016}). Starting with an equation of the form
\begin{equation}\label{e:lemma10}
(\partial_t + v\cdot \nabla_x) g = \nabla_v\cdot(A\nabla_v g) + \nabla_v\cdot H_1 + H_0,
\end{equation} 
with $H_0, H_1 \in L^2(\R^{2d+1})$ and $g,H_0,H_1$ supported in $(t,x,v) \in \R\times \R^d \times B_{r_0}(0)$, one integrates against $2g$, and using the Poincar\'e inequality in $v$, obtains the estimate
%\[\lambda \int_{\R^{2d+1}} |\nabla_v g|^2 \dd x \dd v \dd t \leq \frac C \lambda \left(\int_{\R^{2d+1}} |H_0|^2\]
\[\lambda \|\nabla_v g\|_{L^2}^2 \leq \frac C \lambda \left( \|H_0\|_{L^2}^2 +  \|H_1\|_{L^2}^2\right),\]
where $L^q=L^q(\R^{2d+1})$. Applying the hypoelliptic estimate of \cite{bouchut2002hypoelliptic}, and using $(1+|v|^2) \leq 1+r_0^2$ and $\|A\|_{L^\infty} \leq \Lambda$, gives
\[\|D_x^{\frac 1 3} g\|_{L^2}^2 + \|D_t^{\frac 1 3} g\|_{L^2}^2 \lesssim \Lambda (1+r_0^2)\left(\|\nabla_v g\|_{L^2}^2 + \|H_1\|_{L^2}^2 + \|H_0\|_{L^2}^2 \right), \]
 which, combined with the above estimate for $\|\nabla_v g\|_{L^2}$ and the Sobolev embedding $H^{\frac 1 3}(\R^{2d+1}) \subset L^p(\R^{2d+1})$, yields 
% \begin{equation*}%\label{e:global_est}
% \|D_x^{\frac 1 3} g\|_{L^2}^2 + \|D_t^{\frac 1 3} g\|_{L^2}^2 + \|\nabla_v g\|_{L^2}^2 \leq C\left( \|H_1\|_{L^2}^2 + \|H_0\|_{L^2}^2 \right), 
% \end{equation*}  
 \begin{equation}\label{e:global_est}
 \|g\|_{L^p}^2 \leq C\left(\|H_1\|_{L^2}^2 + \|H_0\|_{L^2}^2 \right), 
 \end{equation}
 with $p = 6(2d+1)/(6d+1)$ and the constant $C$ proportional to $\dfrac \Lambda {\lambda^2} (1+r_0^2)$.
\medskip

\noindent \emph{Step 2: Caccioppoli inequality} (\cite[Lemma 11]{golse2016}). Considering subsolutions of \eqref{e:FP} defined in a cylinder $Q_{r_1}$ and integrating the equation against $2g \Psi^2$ over $\mathcal R := [t_1,0]\times \R^{2d}$, with $t_1 \in (-r_1^2,0]$ and $\Psi$ a smooth, compactly supported cutoff with $0 \leq \Psi \leq 1$, one has (using Young's inequality)
\begin{align*}
\int_{\mathcal R} &\frac d {dt} (g^2 \Psi^2) + 2 \lambda \int_{\mathcal R} |\nabla_v g|^2 \Psi^2  \\
&\leq  \int_{\mathcal R} g^2 \left[ (\partial_t + v\cdot \nabla_x)(\Psi^2) + 8(\Lambda + \|B\|_{L^\infty})^2 \lambda^{-1} (|\nabla_v \Psi|^2 + \Psi^2) \right] + 2\int_{\mathcal R} g s \Psi^2 + \lambda \int_{\mathcal R} |\nabla_v g|^2 \Psi^2.
\end{align*}
For $r_0 \in (0,r_1)$, choose $\Psi$ such that $\Psi \equiv 1$ in $Q_{r_0}$, $\Psi(t,x,v) = 0$ for $t=0$, and $\mbox{supp} \,\Psi \subset Q_{r_1}$, and obtain
\begin{equation}\label{e:lemma11}
\|g\|_{L^\infty_t L^2_{x,v}(Q_{r_0})}^2 + \|\nabla_v g\|_{L^2(Q_{r_0})}^2 \leq C\left( C_{0,1}\|g\|_{L^2(Q_{r_1})}^2 + \|s\|_{L^2(Q_{r_1})}^2\right),
\end{equation}
with $C \lesssim \left(\dfrac {\Lambda + \|B\|_{L^\infty}} \lambda \right)^2$ and $C_{0,1}$ depending on $r_0$ and $r_1$ (via derivatives of $\Psi$).

\medskip

\noindent \emph{Step 3: Gain of integrability} (\cite[Theorem 6]{golse2016}). % For concentric cylinders $Q_{int} \subset Q_{mid} \subset Q_{ext}$, 
Letting $Q_{\intt} = Q_{r_0}$, $Q_{\ext} = Q_{r_1}$, and $Q_{\midd} = Q_{(r_0+r_1)/2}$, 
define cutoffs $\chi_1$ with $\chi_1 \equiv 1$ in $Q_{\intt}$ and $\chi_1 \equiv 0$ outside $Q_{\midd}$, and $\chi_{1/2}$ with $\chi_{1/2} \equiv 1$ in $Q_{\midd}$ and $\chi_{1/2} \equiv 0$ outside $Q_{\ext}$. For $g$ a nonnegative solution of \eqref{e:FP}, the truncated function $g\chi_1$ is a subsolution of \eqref{e:lemma10} with 
\begin{align*}
H_1 &= (-A\nabla_v \chi_1) g \chi_{1/2},\\
H_0 &= (B \chi_1 - A\nabla_v \chi_1)\cdot \nabla_v g \chi_{1/2} + g \chi_{1/2}(\partial_t + v\cdot \nabla_x) \chi_1 + s\chi_1.
\end{align*}
One has
\begin{align*}
\|H_1\|_{L^2}^2 &\lesssim \Lambda^2 \|\nabla_v \chi_1\|_{L^\infty}^2 \|g\|_{L^2(Q_{\ext})}^2,\\
\|H_0\|_{L^2}^2 &\lesssim (\|B\|_{L^\infty}^2 + \Lambda^2\|\nabla_v \chi_1\|_{L^\infty}^2) \|\nabla_v g\|_{L^2(Q_{\midd})}^2 + \|(\partial_t + v\cdot\nabla_x)\chi_1\|_{L^\infty}^2\|g\|_{L^2(Q_{\ext})}^2 + \|s\|_{L^2(Q_{\ext})}^2.
\end{align*}
Using \eqref{e:lemma11} to estimate $\|\nabla_v g\|_{L^2(Q_{\midd})}^2$, we have, after collecting terms,
\begin{align*}
\|H_0\|_{L^2}^2 + \|H_1\|_{L^2}^2 &\lesssim (\Lambda^2 + \|B\|_{L^\infty}^2)^2 \lambda^{-2}\Big[(1+\|\nabla_v \chi_1\|_{L^\infty}^2)\|s\|_{L^2}^2 \\
& \quad \left. + \left((1+ \|\nabla_v \chi_1\|_{L^\infty}^2)(1+ C_{0,1}) + \|(\partial_t + v\cdot\nabla_x)\chi_1\|_{L^\infty}^2\right) \|g\|_{L^2(Q_{\ext})}^2\right].
%\left[ \Lambda^2 \|\nabla_v \chi_1\|_{L^\infty}^2 (1 + (\Lambda + \|B\|_{L^\infty})^2 \lambda^{-2} C_{0,1}) + (\|B\|_{L^\infty} + \Lambda^2\|\nabla_v \chi_1\| \right] \|g\|_{L^2(Q_{ext})}^2
\end{align*}
Estimating $\|\nabla_v \chi_1\|_{L^\infty}$ in terms of $r_0$ and $r_1$,  and applying \eqref{e:global_est}, one has
\[\|g\|_{L^p(Q_{\intt})}^2 \leq C\left(C_{0,1}^2 \|g\|_{L^2(Q_{\ext})}^2 + C_{0,1} \|s\|_{L^2(Q_{\ext})}^2\right),\]
with $C \lesssim \Lambda (\Lambda^4 + \|B\|_{L^\infty}^4) \lambda^{-4}$ and $C_{0,1}$ as in Step 2.

\medskip

\noindent \emph{Step 4: De Giorgi iteration} (\cite[Theorem 12]{golse2016}). For any $\mathfrak g>0$, the goal is to show the existence of $\kappa\in (0,1]$ such that if $\|s\|_{L^\infty(Q_1)} \leq \mathfrak g$ and $\|g\|_{L^2(Q_1)} \leq \kappa$, then $\|g\|_{L^\infty(Q_{1/2})} \leq \frac 1 2$. Taking $\mathfrak g = 1$ and applying this result to $g/(\kappa^{-1}\|g\|_{L^2(Q_{1})}+\|s\|_{L^\infty(Q_1)})$ will imply the estimate \eqref{e:degiorgi} with constant proportional to $\kappa^{-1}$.

Define radii $r_n = \frac 1 2(1+2^{-n})$ and constants $C_n = \frac 1 2 (1-2^{-n})$ for all integers $n\geq 0$. %, and defines time-independent cutoffs $\Psi_n$ with $\Psi_n \equiv 1$ in $Q_{r_n}\cap\{t=0\}$ and $\Psi_n \equiv 0$ outside $Q_{r_{n-1}}\cap\{t=0\}$.
Considering $g_n = (g-C_n)_+$, which is a subsolution of \eqref{e:FP} in $Q_{r_n}$ with source term $s\chi_{\{g\geq C_n\}}$, and proceeding as in Step 2 with a suitable cutoff $\Psi_n$ in $Q_{r_n}$, one obtains
\[U_n:= \|g_n\|_{L^\infty_t L^2_{x,v}(Q_{r_n})}^2 \leq C \left(4^n \|g_n\|_{L^2(Q_{r_{n-1}})}^2 + 2 \int_{Q_{r_{n-1}}} g_n s\right),  \]
with $C \lesssim \left(\dfrac {\Lambda + \|B\|_{L^\infty}} \lambda \right)^2$. Applying H\"older's inequality in both terms on the right-hand side, and using the fact that $g_{n-1} \geq C_n - C_{n-1} = 2^{-n-1}$ whenever $g_n \geq 0$, one concludes
\begin{equation}\label{e:Un}
U_n \leq C 2^{4n} \left[ \|g_{n-1}\|_{L^p(Q_{r_{n-1}})}^2 U_{n-1}^{1-\frac 2 p} + \|s\|_{L^\infty(Q_{r_0})} \|g_{n-1}\|_{L^p(Q_{r_{n-1}})} U_{n-1}^{1-\frac 1 p}\right], 
\end{equation}
with the same $C$. Next, from Step 3 we have 
\[\|g_{n-1}\|_{L^p(Q_{r_{n-1}})}^2 \lesssim \Lambda (\Lambda^4 + \|B\|_{L^\infty}^4) \lambda^{-4} \left(8^n \|g_{n-1}\|_{L^2(Q_{r_{n-2}})}^2 + 4^n \int_{Q_{r_{n-2}}} s^2 \chi_{\{g_{n-1}\geq 0\}}\right) ,\]
with $p$ as above. Estimating both terms on the right-hand side in a manner similar to the above, we have
\[\|g_{n-1}\|_{L^p(Q_{r_{n-1}})}^2 \lesssim \Lambda (\Lambda^4 + \|B\|_{L^\infty}^4) \lambda^{-4} 2^{4n}(1+\mathfrak g^2) U_{n-2}.\]
%\lesssim \Lambda (\Lambda^4 + \|B\|_{L^\infty}^4) \lambda^{-4} \left(2^{3n} U_{n-2} + 4^n \int_{Q_{r_{n-2}}} s^2 \chi_{\{g_{n-1}\geq 0\}}\right) ,\]
Here, we take $\mathfrak g =1$. Plugging this into \eqref{e:Un} and using $U_{n-1} \leq U_{n-2}$ and $U_{n-2} \leq \kappa \leq 1$ gives 
\[ U_n \lesssim \Lambda(\Lambda^6 + \|B\|_{L^\infty}^6)\lambda^{-6}  2^{8n} U_{n-2}^{\frac 3 2 - \frac 1 p}.\]
%(We have used $U_{n-2}^{2-\frac 2 p} \leq U_{n-2}^{\frac 3 2 - \frac 1 p}$ since $U_{n-2} \leq 1$.)
Renaming $V_n = U_{2n}$, $\alpha = \frac 3 2 - \frac 1 p > 1$, and 
\[\beta =  \Lambda(\Lambda^6 + \|B\|_{L^\infty}^6)\lambda^{-6} 2^8,\]
we have $V_n \leq \beta^n V_{n-1}^\alpha$, which, applied iteratively, gives
\[ V_n \leq \beta^{n + \alpha(n-1) + \alpha^2(n-2) + \cdots + \alpha^{n-1}} V_0^{\alpha^n} \leq \left(\beta^{\frac \alpha {(\alpha-1)^2}} V_0\right)^{\alpha^n}.\]
If 
\[V_0 = \|g\|_{L^\infty_t L^2_{x,v}(Q_{r_0})}^2 \leq \beta^{\frac{-\alpha}{(\alpha-1)^2}} =: \kappa, \]
then $U_{2n} = V_n \to 0$ as $n\to \infty$, and $U_\infty = \|(g - \frac 1 2)_+ \|_{L^2(Q_{1/2})}^2 = 0$, so $g \leq \frac 1 2$ in $Q_{1/2}$. This implies the constant in the $L^\infty$ estimate is proportional to 
\[\kappa^{-1} = \beta^{\frac \alpha{(\alpha-1)^2}} \lesssim \left((\Lambda^7 + \Lambda \|B\|_{L^\infty}^6)\lambda^{-6}\right)^{3(6d+4)(2d+1)},\]
so we may take a value (not necessarily optimal) of $P = 21(6d+4)(2d+1)$ in Theorem \ref{t:local}. %since the value of $p$ implies $\alpha/(\alpha-1)^2 = 3(6d+4)(2d+1)$. 
\bibliographystyle{abbrv}
\bibliography{landau}

\begin{thebibliography}{10}

\bibitem{alexandre2015apriori}
R.~Alexandre, J.~Liao, and C.~Lin.
\newblock Some a priori estimates for the homogeneous {L}andau equation with
  soft potentials.
\newblock {\em Kinet. Relat. Models}, 8(4):617--650, 2015.

\bibitem{bouchut2002hypoelliptic}
F.~Bouchut.
\newblock Hypoelliptic regularity in kinetic equations.
\newblock {\em Journal de Mathématiques Pures et Appliquées}, 81(11):1135 --
  1159, 2002.

\bibitem{cameron2017landau}
S.~Cameron, L.~Silvestre, and S.~Snelson.
\newblock Global a priori estimates for the inhomogeneous {L}andau equation
  with moderately soft potentials.
\newblock {\em Annales de l'Institut Henri Poincar\'e (C) Analyse Non
  Lin\'eare}, 35(3):625--642, 2018.

\bibitem{carrapatoso2017}
K.~Carrapatoso and S.~Mischler.
\newblock Landau equation for very soft and {C}oulomb potentials near
  {M}axwellians.
\newblock {\em Annals of PDE}, 3(1):1, Jan 2017.

\bibitem{carrapatoso2016cauchy}
K.~Carrapatoso, I.~Tristani, and K.-C. Wu.
\newblock Cauchy problem and exponential stability for the inhomogeneous
  {L}andau equation.
\newblock {\em Archive for Rational Mechanics and Analysis}, 221(1):363--418,
  2016.

\bibitem{chapmancowling}
S.~Chapman and T.~G. Cowling.
\newblock {\em The mathematical theory of non-uniform gases: An account of the
  kinetic theory of viscosity, thermal conduction and diffusion in gases}.
\newblock Cambridge University Press, 3rd edition, 1970.

\bibitem{desvillettes2015landau}
L.~Desvillettes.
\newblock Entropy dissipation estimates for the {L}andau equation in the
  {C}oulomb case and applications.
\newblock {\em Journal of Functional Analysis}, 269(5):1359 -- 1403, 2015.

\bibitem{desvillettes2016general}
L.~Desvillettes.
\newblock Entropy dissipation estimates for the {L}andau equation: general
  cross sections.
\newblock In {\em From particle systems to partial differential equations.
  {III}}, volume 162 of {\em Springer Proc. Math. Stat.}, pages 121--143.
  Springer, 2016.

\bibitem{desvillettes2000landau}
L.~Desvillettes and C.~Villani.
\newblock On the spatially homogeneous {L}andau equation for hard potentials
  part {I}: existence, uniqueness and smoothness.
\newblock {\em Communications in Partial Differential Equations},
  25(1-2):179--259, 2000.

\bibitem{desvillettes2005trend}
L.~Desvillettes and C.~Villani.
\newblock On the trend to global equilibrium for spatially inhomogeneous
  kinetic systems: {T}he {B}oltzmann equation.
\newblock {\em Inventiones mathematicae}, 159(2):245--316, 2005.

\bibitem{fournier2010coulomb}
N.~Fournier.
\newblock Uniqueness of bounded solutions for the homogeneous {L}andau equation
  with a {C}oulomb potential.
\newblock {\em Communications in Mathematical Physics}, 299(3):765--782, 2010.

\bibitem{FG2009Landau}
N.~Fournier and H.~Gu\'erin.
\newblock Well-posedness of the spatially homogeneous {L}andau equation for
  soft potentials.
\newblock {\em J. Funct. Anal.}, 256(8):2542--2560, 2009.

\bibitem{golse2016}
F.~Golse, C.~Imbert, C.~Mouhot, and A.~Vasseur.
\newblock Harnack inequality for kinetic {F}okker-{P}lanck equations with rough
  coefficients and application to the {L}andau equation.
\newblock {\em Annali della Scuola Normale Superiore di Pisa}, XIX(1):253--295,
  2019.

\bibitem{gualdani2014radial}
M.~Gualdani and N.~Guillen.
\newblock Estimates for radial solutions of the homogeneous {L}andau equation
  with {C}oulomb potential.
\newblock {\em Analysis \& PDE}, 9(8):1773 -- 1810, 2016.

\bibitem{gualdani2017landau}
M.~Gualdani and N.~Guillen.
\newblock On {$A_p$} weights and the {L}andau equation.
\newblock {\em Calculus of Variations and Partial Differential Equations},
  58(1):17, 2018.

\bibitem{guo2002periodic}
Y.~Guo.
\newblock The {L}andau equation in a periodic box.
\newblock {\em Communications in Mathematical Physics}, 231(3):391--434, 2002.

\bibitem{henderson2017smoothing}
C.~Henderson and S.~Snelson.
\newblock {$C^\infty$} smoothing for weak solutions of the inhomogeneous
  {L}andau equation.
\newblock {\em Archive for Rational Mechanics and Analysis}, to appear.

\bibitem{HST2017landau}
C.~Henderson, S.~Snelson, and A.~Tarfulea.
\newblock Local existence, lower mass bounds, and a new continuation criterion
  for the {L}andau equation.
\newblock {\em Journal of Differential Equations}, 266(2--3):1536--1577, 2019.

\bibitem{imbert2018toymodel}
C.~Imbert and C.~Mouhot.
\newblock The {S}chauder estimate in kinetic theory with application to a toy
  nonlinear model.
\newblock {\em Annales Henri Lebesgue}, to appear.

\bibitem{lifschitzpitaevskii}
E.~M. Lifshitz and L.~P. Pitaevskii.
\newblock {\em Physical Kinetics}, volume~10 of {\em Course of Theoretical
  Physics}.
\newblock Butterworth-Heinemann, 1st edition, 1981.

\bibitem{luk2018vacuum}
J.~Luk.
\newblock Stability of vacuum for the {L}andau equation with moderately soft
  potentials.
\newblock {\em Annals of PDE}, 5(1):11, 2019.

\bibitem{mouhot2006equilibrium}
C.~Mouhot and L.~Neumann.
\newblock Quantitative perturbative study of convergence to equilibrium for
  collisional kinetic models in the torus.
\newblock {\em Nonlinearity}, 19(4):969, 2006.

\bibitem{pascucci2004ultraparabolic}
A.~Pascucci and S.~Polidoro.
\newblock The {M}oser's iterative method for a class of ultraparabolic
  equations.
\newblock {\em Communications in Contemporary Mathematics}, 06(03):395--417,
  2004.

\bibitem{silvestre2015landau}
L.~Silvestre.
\newblock Upper bounds for parabolic equations and the {L}andau equation.
\newblock {\em Journal of Differential Equations}, 262(3):3034 -- 3055, 2017.

\bibitem{strain2008exponential}
R.~M. Strain and Y.~Guo.
\newblock Exponential decay for soft potentials near {M}axwellian.
\newblock {\em Archive for Rational Mechanics and Analysis}, 187(2):287--339,
  2008.

\bibitem{villani1998landau}
C.~Villani.
\newblock On the spatially homogeneous {L}andau equation for {M}axwellian
  molecules.
\newblock {\em Mathematical Models and Methods in Applied Sciences},
  08(06):957--983, 1998.

\bibitem{villani2002review}
C.~Villani.
\newblock A review of mathematical topics in collisional kinetic theory.
\newblock In {\em Handbook of mathematical fluid dynamics, {V}ol. {I}}, pages
  71--305. North-Holland, Amsterdam, 2002.

\bibitem{Wang2009ultraparabolic}
W.~Wang and L.~Zhang.
\newblock The {$C^\alpha$} regularity of a class of non-homogeneous
  ultraparabolic equations.
\newblock {\em Science in China Series A: Mathematics}, 52(8):1589--1606, 2009.

\bibitem{Wu2014global}
K.-C. Wu.
\newblock Global in time estimates for the spatially homogeneous {L}andau
  equation with soft potentials.
\newblock {\em Journal of Functional Analysis}, 266(5):3134 -- 3155, 2014.

\end{thebibliography}
\end{document}